\newtheorem{theorem}{Theorem}[section]
\newtheorem{lemma}[theorem]{Lemma}
\newtheorem{corollary}[theorem]{Corollary}
\newtheorem{proposition}[theorem]{Proposition}
\theoremstyle{definition}
\newtheorem{example}[theorem]{Example}
\theoremstyle{remark}
\newtheorem{discussion}[theorem]{Discussion}
\newcommand{\inc}{\subseteq}
\newcommand{\C}{\mathbb{C}}
\newcommand{\cB}{\mathcal{B}}
\newcommand{\cF}{\mathcal{F}}
\newcommand{\inj}{\hookrightarrow}
\newcommand{\cM}{\mathcal{M}}
\newcommand\dlim[1]{\varinjlim\,_{#1}\,}
\newcommand{\fA}{\mathfrak{A}}
\newcommand{\fB}{\mathfrak{B}}
\newcommand{\fh}{\mathfrak{h}}
\newcommand{\fI}{\mathfrak{I}}
\newcommand{\fm}{\mathfrak{m}}
\newcommand{\fn}{\mathfrak{n}}
\newcommand{\fM}{\mathfrak{M}}
\newcommand{\fN}{\mathfrak{N}}
\newcommand{\gri}{geometrically reduced and irreducible}
\newcommand{\infl}{intersection flat}
\newcommand{\ins}{\preceq}
\newcommand{\inns}{\precneqq}
\newcommand{\la}{\lambda}
\newcommand{\N}{\mathbb{N}}
\newcommand{\nx}{\prec_{\scriptstyle \mathrm{im}}}
\newcommand{\pep}{prime extension property}
\newcommand{\step}{stable prime extension property}
\newcommand{\ov}{\overline}
\newcommand{\rifi}{respectively, \infl\ for ideals}
\newcommand{\surj}{\twoheadrightarrow}
\newcommand{\Tor}{\textrm{Tor}}
\newcommand{\Z}{\mathbb{Z}}
\newcommand\vect[2]{#1_1,\,\ldots,\, #1_{#2}}
\def\vect#1#2{{#1}_1, \, \ldots, \, {#1}_{#2}}
\newcommand{\mx}{\begin{pmatrix}}
	\newcommand{\emx}{\end{pmatrix}}
\newcommand{\wh}[1]{\widehat{#1}}
\newcommand{\Ann}{\mathrm{Ann}}
\newcommand{\bs}[1]{\boldsymbol{#1}}
\numberwithin{equation}{section}
\begin{document}

\title[Extensions of primes and flatness]{Extensions of primes, flatness, and\\ intersection flatness}

\author{Melvin Hochster}
\address{Department of Mathematics, East Hall, 530 Church St., Ann Arbor, MI 48109--1043, USA}
\curraddr{}
\email{hochster@umich.edu}
\thanks{2020 {\it  Mathematics Subject Classification}. Primary 13B02, 13B10, 13B40, 13C11}
\thanks{The first author was partially supported by  National Science Foundation grants
	DMS--1401384 and DMS--1902116.}

\author{Jack Jeffries}
\address{Matem\'aticas B\'asicas Group, Centro de Investigaci\'on en Matem\'aticas, Guanajuato, Gto. 36023, M\'exico}
\curraddr{}
\email{jeffries@cimat.mx}

\thanks{The second author was partially supported by National Science Foundation grant DMS--1606353}


\keywords{Base change, flatness, intersection flatness, \pep, prime ideal, ring extension, \step.}

\begin{abstract}
We study when $R \to S$ has the property that prime ideals of $R$ extend to prime ideals
or the unit ideal of $S$, and the situation where this property continues to hold after adjoining the same indeterminates
to both rings.  We  prove that if $R$ is reduced, every maximal ideal of $R$ contains only finitely many minimal primes of $R$, and prime 
ideals of $R[\vect Xn]$ extend to prime ideals of $S[\vect X n]$ for all $n$,  then $S$ is flat over $R$. We give a counterexample
to flatness over a reduced quasilocal ring $R$ with infinitely many minimal primes by constructing a non-flat $R$-module $M$ such that
$M = PM$ for every minimal prime $P$ of $R$.  We study the notion of intersection flatness and use it to prove that  in certain graded cases it suffices
to examine just one closed fiber to prove the \step.
\end{abstract}

\maketitle

\begin{center}
	\dedicatory{\emph{Dedicated to Roger and Sylvia Wiegand on\\ the occasion of their 150th birthday}}
\end{center}

\section{Introduction}\label{intro} 

All rings in this paper are assumed commutative, associative, with multiplicative identity.  Although we were originally
motivated in studying the Noetherian case, some of our results hold in much greater generality.

We say that an  $R$-algebra $S$ or that the homomorphism $R \to S$ has the {\it prime extension property}
if for every prime $P$ of $R$,   $PS$ is prime in $S$ or the unit ideal of~$S$.  We say that the $R$-algebra $S$
or the homomorphism $R \to S$ has the {\it stable prime extension property} if for every finite set of indeterminates
$\vect X n$ over these rings,  $R[\vect X n] \to S[\vect X n]$ has the \pep.

One of our results, Theorem~\ref{thm:main},  asserts the following. 

\begin{theorem}\label{th:intro-main} Let $R$ be a reduced ring such that every maximal ideal contains only finitely many minimal primes; in particular, this holds if $R$ is reduced and locally Noetherian.
	If $R \to S$ has the \step, then $S$ is flat over~$R$.
\end{theorem}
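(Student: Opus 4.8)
The plan is to reduce to a quasilocal base, settle the case that $R$ is a domain by a Rees–algebra argument, and then obtain the general reduced case from the domain case using that the product of the finitely many minimal primes of $R$ is zero.

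\emph{Reduction to $R$ quasilocal.} Flatness of $S$ over $R$ may be tested after localizing $R$ at each maximal ideal, and the \step\ is inherited by $R_\fm\to S_\fm$: a prime of $R_\fm[\vect Xn]$ is a prime $Q$ of $R[\vect Xn]$ with $Q\cap R\inc\fm$, and $QS_\fm[\vect Xn]$ is obtained from the prime‑or‑unit ideal $QS[\vect Xn]$ by inverting the image of $R\setminus\fm$, hence is again prime or the unit ideal. So we may assume $\rmk$ is quasilocal and reduced, with minimal primes $P_1,\dots,P_r$, all contained in $\fm$, and $P_1\cap\dots\cap P_r=(0)$.

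\emph{The domain case} ($r=1$). Here $(0)$ is prime in $R$, so from the \pep\ applied to $(0)$ the ideal $0\cdot S=(0)$ of $S$ is prime or the unit ideal; hence $S$ is a domain or the zero ring, and the latter is flat, so assume $S\neq 0$. First, $R\inj S$: if some nonzero $a$ lay in the kernel, then $X^{2}-aY^{3}$ would be prime in $R[X,Y]$ — indeed $R[X,Y]/(X^{2}-aY^{3})$ embeds, by division by this monic‑in‑$X$ polynomial, into the domain $\mathrm{Frac}(R)[X,Y]/(X^{2}-aY^{3})$ — whereas its extension $(X^{2})S[X,Y]=(X)^{2}S[X,Y]$ is
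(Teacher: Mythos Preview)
Your proposal is truncated mid-sentence, so I can only assess the plan and the fragment you have written.  What is written is correct: the localization step is fine (you localize only $R$, whereas the paper localizes at a maximal ideal of $S$ first to make both rings quasilocal, but your version suffices), and your injectivity argument via the prime $X^{2}-aY^{3}$ is a legitimate variant of the paper's Proposition~3.3, which uses $X^{2}-aY$.  Your announced Rees--algebra argument for the domain case is exactly what the paper does when $h=1$.

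The real issue is your outline for the reduction from the general reduced case to the domain case ``using that the product of the finitely many minimal primes of $R$ is zero.''  This is where essentially all the content of the theorem lives, and the sentence as stated does not indicate a valid mechanism.  Knowing that $S/P_iS$ is flat over $R/P_i$ for each minimal prime $P_i$ does \emph{not} formally imply that $S$ is flat over $R$: a filtration of an arbitrary $R$-module $M$ by the $P_1\cdots P_jM$ gives successive quotients that are $R/P_j$-modules, but to conclude $\Tor_1^R(-,S)$ vanishes on these you need $\Tor_1^R(R/P_j,S)=0$ in addition to flatness of $S/P_jS$ over $R/P_j$, and that extra vanishing is not available a priori.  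The paper does \emph{not} separate the argument this way.  Instead it runs the Rees--algebra trick for every $h$: one must show that the map $R[It]\otimes_R S \to S[It]$ is injective, and for $h\geq 2$ the key step is to prove that every element of $R\smallsetminus\bigcup_j P_j$ is a nonzerodivisor on $R[It]\otimes_R S$.  This uses a ``cap lemma'' (Lemma~3.2: if $S/\fA S$ is flat over $R/\fA$ then $\fA S\cap \fB S=(\fA\cap\fB)S$) together with the domain case applied to the quotients $R[It]/Q_i$, where $Q_i=P_iR[t]\cap R[It]$, to conclude $\bigcap_i Q_i T=0$ after a suitable graded localization.  If you intend a genuinely different bridge from the domain case to the general case, you will need to supply it explicitly; as written, the plan has a gap precisely at the step that carries the weight of the theorem.
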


In the case $R$ is a domain, this follows from a result of Picavet \cite[Theorem~3.7]{Pi}, with a different proof.  Note that the hypothesis that $R$ be
reduced is quite necessary, since $R \to R/\fN_R$,  where $\fN_R$ is the ideal of nilpotent elements of $R$, also
has  the \step. It is easy to see that the hypothesis of having the \step\ cannot be weakened to having the \pep\ in Theorem~\ref{th:intro-main}; for example, since the surjection  $\C\llbracket X,Y \rrbracket / (XY) \twoheadrightarrow \C \llbracket X \rrbracket$ sending $Y\mapsto 0$, satisfies the \pep, but is not flat. Note that in this example, the \pep\ is lost if we adjoin an indeterminate $Z$ to both rings, since $(X, Y-Z^2)$ yields a prime
in  $\C\llbracket X,Y\rrbracket [Z]/(XY)$ but not in $\C \llbracket X\rrbracket [Z]$. The hypothesis on the minimal primes of $R$ is also necessary: in Section~\ref{sec:example}, we construct an inclusion of quasilocal rings in which the source is reduced that satisfies the \step, but is not flat.  A more refined but more technical version of Theorem~\ref{th:intro-main} is given in Theorem~\ref{main2}.  Our results here are related
to results of the first author on radical ideals in~\cite{Ho}.

We note that there are significant examples where the \step\ holds:  in particular,  the following result from \S5, Theorem~\ref{gr-reg}  generalizes \cite[Corollary~2.9]{AH} (also   cf.~\cite[Theorem~12.1(viii)]{EGA-IV3}). This surprisingly enables one to deduce that the stable prime extension property holds by examining one closed fiber.

\begin{theorem}\label{th:int-gr-reg} If $K$ is an algebraically closed field, $S$ is a $\Z$-graded $K$-algebra (but we are not assuming that $S$ is Noetherian nor that $S_0 = K$)
	and $\vect Fn$ are positive degree forms of $S$ with coefficients in $K$ that form a regular sequence and generate a prime ideal $Q$ of $S$, then
	$K[\vect Fn] \to S$ has the \step. \end{theorem}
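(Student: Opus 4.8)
The plan is to reduce the statement to the known Theorem~\ref{th:intro-main}: once we verify that $R := K[\vect Fn]$ is a reduced ring in which every maximal ideal contains only finitely many minimal primes, it suffices to show that $R \to S$ has the \step, i.e., that $R[\vect Yd] \to S[\vect Yd]$ has the \pep\ for every finite set of indeterminates $\vect Yd$; flatness of $S$ over $R$ then follows. Actually, since $F_1, \dots, F_n$ form a regular sequence in $S$, they are in particular algebraically independent over $K$ (any polynomial relation would, after clearing to lowest degree, produce a zerodivisor relation among the $F_i$), so $R = K[\vect Fn]$ is a polynomial ring in $n$ variables over $K$ — in particular a domain, hence reduced, and locally Noetherian, so the hypotheses of Theorem~\ref{th:intro-main} are satisfied, and only the \step\ needs proof.

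To establish the \step, fix indeterminates $\vect Yd$, set $R' = R[\vect Yd]$, $S' = S[\vect Yd]$, $Q' = QS'$, and let $\fp$ be a prime of $R'$; I must show $\fp S'$ is prime or the unit ideal of $S'$. The key idea is to exploit that $F_1, \dots, F_n$ form a regular sequence generating a prime $Q$, together with algebraic closedness of $K$. First I would handle the case $\fp = \fm R'$ for $\fm$ a maximal ideal of $R = K[\vect Fn]$: since $K$ is algebraically closed, $\fm = (F_1 - a_1, \dots, F_n - a_n)$ for scalars $a_i \in K$, so $\fm S' = (F_1 - a_1, \dots, F_n - a_n)S'$. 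When all $a_i = 0$ this is $QS' = Q'$, which is prime because $Q$ is prime (a polynomial extension $S \to S[\vect Yd]$ takes primes to primes, since $(S/Q)[\vect Yd]$ is a domain). When some $a_i \neq 0$, I expect $(F_1 - a_1, \dots, F_n - a_n)S'$ to be the unit ideal — this should follow from the graded structure: $S$ is $\Z$-graded with the $F_i$ in positive degree, and reducing modulo positive-degree elements that are "inhomogeneous translates" forces $1$ into the ideal; more carefully, working in the associated graded or using that $Q$ is generated by a homogeneous regular sequence, one shows $S/(F_1 - a_1, \dots, F_n - a_n)S$ has no graded structure forcing it nonzero, or directly that $1 \in (F_1-a_1,\ldots,F_n-a_n)S$. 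For a general prime $\fp$ of $R'$, I would use that $\fp$ lies over a prime of $R = K[\vect Fn]$, pass to the residue field, and reduce to the polynomial-ring fiber computation: the point is that $S' \otimes_R \kappa(\fp \cap R)$ is, up to base change, governed by the regular-sequence/primeness hypothesis on $\vect Fn$, and one can descend along the flat map $R \to R'$.

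The main obstacle I anticipate is the general-prime case: showing that for an arbitrary prime $\fp$ of $R' = R[\vect Yd]$ (not just an extended maximal ideal of $R$), the extension $\fp S'$ remains prime or the unit ideal. The subtlety is that $R[\vect Yd] \to S[\vect Yd]$ need not be as rigid as $R \to S$, and primes $\fp$ of $R'$ can be "diagonal" — involving both the $F_i$ and the $Y_j$ in a coupled way (as the Introduction's example $(X, Y - Z^2)$ warns). The resolution should be that since $\vect Fn$ is a regular sequence generating a prime in $S$, the same is true of $\vect Fn$ viewed in $S' = S[\vect Yd]$ (regular sequences and prime-ness of the quotient are preserved under polynomial extension), so in fact it suffices to prove the \emph{prime} extension property for $K[\vect Fn] \to S[\vect Yd]$ directly; but $S[\vect Yd]$ is again a $\Z$-graded $K$-algebra (put the $Y_j$ in degree $0$, say, or any nonnegative degrees) in which $\vect Fn$ is a positive-degree regular sequence generating a prime — so the \pep\ part of the argument is "stable" by construction, and the reduction to a single fiber is exactly what makes the \step\ accessible. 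The technical heart is thus the fiber computation: proving that $K[\vect Fn] \to S$ having good behavior on the closed fiber $S/QS$ propagates, which is where I expect to invoke the algebraically closed hypothesis and the regular-sequence condition most essentially, likely via a local-criterion-for-flatness or a Nakayama-type argument in the graded setting.
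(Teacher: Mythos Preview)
Your proposal has two genuine gaps.

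First, the opening paragraph is confused: Theorem~\ref{th:intro-main} says \step\ $\Rightarrow$ flat, so it cannot help you \emph{prove} the \step---it only gives flatness as a corollary once you are done. This is harmless but signals that the actual strategy is missing.

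Second, and fatally, your claim that $(F_1-a_1,\dots,F_n-a_n)S$ is the unit ideal when some $a_i \neq 0$ is false. Take $S = K[x]$ with $F_1 = x$ in degree $1$: then $(x-1)S$ is a proper prime ideal, not the unit ideal. In fact for general $\vect c n \in K^n$ the quotient $S/(F_1-c_1,\dots,F_n-c_n)$ is typically a nonzero ring, and the real content is that it is a \emph{domain}. The paper invokes \cite[Proposition~2.8]{AH} for exactly this: a regular sequence of positive-degree forms generating a prime remains a regular sequence generating a prime after translating by constants. You have no substitute for this step.

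Third, even granting that all closed fibers $S/\fm S$ are domains, you have no mechanism to conclude the \pep\ for arbitrary primes $\fp$ of $R' = R[\vect Y d]$. Your sketch (``pass to the residue field, reduce to the fiber computation, descend along the flat map'') is not an argument; the Introduction's example $(X,\,Y-Z^2)$ shows exactly why diagonal primes are dangerous. The paper's route here is the point of \S5: one first shows $S$ is \emph{free} over $R$ (homogeneous lifts of a $K$-basis of $S/Q$ span by graded Nakayama and are independent by induction on $n$ using the regular-sequence hypothesis), hence $R[\vect Y d] \to S[\vect Y d]$ is module-free and therefore \emph{intersection flat}. Since $R[\vect Y d]$ is a Hilbert ring with algebraically closed residue fields, Proposition~\ref{prop:Hilbert-pep} and Corollary~\ref{affstep} reduce the \pep\ to the closed-fiber statement just discussed. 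You correctly anticipated a ``Nakayama-type argument in the graded setting'' at the end, but freeness is used to get intersection flatness, not merely flatness, and it is intersection flatness that bridges closed fibers to all primes.
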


This and related results in \S5 make use of the notion of {\it intersection flatness}. Some basic properties of this notion are established in \S5, and applied to give sufficient criteria for the \step. The results in \S5 are inspired by the work of T.~Ananyan and the first author in \cite{AH}. Intersection flatness is also closely related to the notion of \emph{content} in the sense of Ohm and Rush \cite{OR,Ru} (see also \cite{ES1, ES2}), which we recall in \S5.

\section{Basic properties}\label{basic} 

We collect some basic properties of the \pep\ and the \step. These notions were studied by Picavet \cite{Pi} under the names ``prime producing'' and ``universally prime producing.'' Many of the properties established in this section also appear in \cite[\S 2]{Pi}.

\begin{proposition}\label{bas1}  Let $R \to S$ be a ring homomorphism, and let  $R_\lambda \to S_\lambda$ be a direct limit system of ring
	homomorphisms indexed by $\lambda$. 
	
	\begin{enumerate}[label=(\alph*)]
		\item\label{2.1a} If $R \to S$ and $S \to T$ both have the \pep\ (respectively, \step), then so does the composite map $R \to T$.
		\item\label{2.1b} $R \to S$ has the \pep\ if and only if for every surjection $R \surj D$, where $D$ is a domain,  $D \otimes_R S$ is a domain or zero.
		\item\label{2.1c} $R \to S$ has the \step\ if and only if for every map $R\to D$, where $D$ is a domain finitely generated over $R$, $D \otimes_R S$ is a domain or zero. 
		\item\label{2.1d} If $R_\lambda \to S_\lambda$ has the \pep\ (respectively, \step) for each $\lambda$, so does $\dlim{\lambda} (R_\lambda \to S_\lambda)$.
		\item\label{2.1e} $R \to S$ has the \step\ if and only if for every map $R \to D$,  where $D$ is a domain, $D \otimes_R S$ is a domain or zero.
		\item\label{2.1f}  $R \to S$ has the \step\ if and only if for every homomorphism $R \to R'$, $R' \to R' \otimes_R S$ has the \pep, and this
		holds if and only for every homomorphism $R \to R'$, $R' \to R' \otimes_R S$ has the \step.
	\end{enumerate} 
\end{proposition}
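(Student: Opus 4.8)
The plan is to reduce everything to two organizing observations and one small lemma. The first observation is that for a prime $P$ of $R$ one has $(R/P)\otimes_R S = S/PS$, so ``$PS$ is prime or the unit ideal'' means exactly ``$S/PS$ is a domain or zero'', and the surjections of $R$ onto domains are precisely the maps $R\surj R/P$ with $P$ prime; this is \ref{2.1b} verbatim. Part \ref{2.1a} for the \pep\ is then immediate: given a prime $P$ of $R$, either $PS=S$, so $PT=(PS)T=T$, or $PS$ is prime, so $PT=(PS)T$ is prime or the unit ideal by the \pep\ of $S\to T$. The second observation is that adjoining indeterminates to all rings in sight is compatible with forming tensor products, quotients by extended ideals, and filtered direct limits; since by definition $R\to S$ has the \step\ iff $R[\vect Xn]\to S[\vect Xn]$ has the \pep\ for every $n$, this lets us deduce the \step\ assertions in \ref{2.1a}, \ref{2.1d}, \ref{2.1f} from their \pep\ counterparts. (In particular the \step\ implies the \pep, as one sees by taking $n=0$.)

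The lemma is: a filtered direct limit $A=\dlim{\lambda}A_\lambda$ of rings, each of which is a domain or the zero ring, is again a domain or zero. If $A\ne 0$ then $1_A\ne 0$ is the image of $1_{A_\lambda}$, so every $A_\lambda\ne 0$ and hence every $A_\lambda$ is a domain; and if $ab=0$ in $A$ with $a,b$ the images of $a',b'\in A_\lambda$, then $a'b'$ dies further out in the system, say in $A_\mu$, where we conclude $a=0$ or $b=0$.

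Now \ref{2.1c}: a finitely generated domain $R$-algebra has the form $D=R[\vect Xn]/Q$ with $Q$ a prime of $R[\vect Xn]$, and then $D\otimes_R S=S[\vect Xn]/QS[\vect Xn]$; so the condition ``$D\otimes_R S$ is a domain or zero for all finitely generated domain $R$-algebras $D$'' is precisely the statement that $QS[\vect Xn]$ is prime or the unit ideal for every prime $Q$ of $R[\vect Xn]$ and every $n$, i.e.\ the \step. For \ref{2.1e}, one implication is a special case of \ref{2.1c}; for the other, write a domain $R$-algebra $D$ as the filtered union of its finitely generated $R$-subalgebras $D_\lambda$ (each a domain, being a subring of $D$), so $D\otimes_R S=\dlim{\lambda}(D_\lambda\otimes_R S)$, and apply \ref{2.1c} together with the lemma. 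For the \pep\ case of \ref{2.1d}, with $R=\dlim{\lambda}R_\lambda$, $S=\dlim{\lambda}S_\lambda$ and $P$ a prime of $R$, let $P_\lambda$ be the contraction of $P$ to $R_\lambda$; then $R_\lambda/P_\lambda\hookrightarrow R/P$ so $P_\lambda$ is prime, one checks $S/PS=\dlim{\lambda}S_\lambda/(P_\lambda S_\lambda)$, each term is a domain or zero by the \pep\ of $R_\lambda\to S_\lambda$, and the lemma applies. The \step\ cases of \ref{2.1a} and \ref{2.1d} then follow by running the \pep\ arguments on the systems obtained after adjoining $\vect Xn$.

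Finally \ref{2.1f} is a cycle of implications. If $R\to S$ has the \step\ and $R\to R'$ is arbitrary, then for every domain $R'$-algebra $D$ we have $D\otimes_{R'}(R'\otimes_R S)\cong D\otimes_R S$, which is a domain or zero by \ref{2.1e} (regarding $D$ as a domain $R$-algebra via $R\to R'\to D$); hence $R'\to R'\otimes_R S$ has the \step\ by \ref{2.1e}. That the \step\ for all such base changes implies the \pep\ for all of them is trivial. And if $R'\to R'\otimes_R S$ has the \pep\ for every $R\to R'$, then taking $R'=R[\vect Xn]$ shows $R[\vect Xn]\to S[\vect Xn]$ has the \pep\ for every $n$, i.e.\ $R\to S$ has the \step. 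I do not anticipate a genuine obstacle; the step requiring the most care is the bookkeeping in \ref{2.1d} and \ref{2.1e}, namely checking that the pertinent tensor products and quotients really do commute with the filtered direct limits and that the transition maps are arranged so that the lemma applies.
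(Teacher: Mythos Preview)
Your proposal is correct and follows essentially the same route as the paper: both reduce \ref{2.1b} to the identification $D\cong R/P$, prove \ref{2.1c} by presenting a finitely generated domain as $R[\vect Xn]/Q$, derive \ref{2.1e} from \ref{2.1c} via a filtered-union-of-subalgebras argument together with the lemma that a filtered direct limit of domains-or-zero is a domain-or-zero, handle \ref{2.1d} by contracting $P$ to each $P_\lambda$ and invoking that same lemma, and obtain \ref{2.1f} from the isomorphism $(R'\otimes_R S)\otimes_{R'}D\cong S\otimes_R D$. Your closing of the cycle in \ref{2.1f} by specializing $R'=R[\vect Xn]$ is a minor variant of the paper's appeal to \ref{2.1e}, but the substance is the same.
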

\begin{proof} Part \ref{2.1a} is clear, since $T/PT \cong T/(PS)T$, and \ref{2.1b} is a consequence of the fact that 
	$D \cong R/P$. 
	For \ref{2.1c}, given a domain $D$ finitely generated over $R$, take a surjection $R[\vect Xn]\surj P$. Since $R[\vect Xn]\to S[\vect Xn]$ has the \pep, $S\otimes_R D \cong S[\vect Xn] \otimes_{R[\vect Xn]} D$ is a domain or zero by \ref{2.1b}. Conversely, if $P\subseteq R[\vect Xn]$ is a prime for which $PS[\vect Xn]$ is neither prime nor the unit ideal, then setting $D=R[\vect Xn]/P$ yields $D\otimes_R S \cong S[\vect Xn]/PS[\vect Xn]$ which is neither a domain nor zero.
	For \ref{2.1d}, note that if $P$ is a prime of $\dlim \lambda R_\lambda$ and $P_\lambda$ is the contraction of
	$P$ to $R_\lambda$, then $P = \dlim \lambda P_\lambda$,  and the result for the \pep\ follows from the fact that a direct limit of rings each of which is
	a domain or zero is itself a domain or zero.  The result for \step\ is then a consequence of the fact that direct limit commutes with adjoining the variables.
	Part \ref{2.1e} is a consequence of \ref{2.1c} and \ref{2.1d}: given a map from $R$ to a domain~$D$, one may write $D= \dlim{\lambda} D_{\lambda}$ for a directed system of domains that are finitely generated over $R$, and $S\otimes_R D\cong \dlim{\lambda} (S\otimes_R D_{\lambda})$. Part \ref{2.1f} follows at once from the characterization in \ref{2.1e} and the isomorphisms $(R' \otimes_R S)\otimes_{R'} D \cong S\otimes_R D$.
\end{proof}

The name ``\step'' is partly motivated by statement~\ref{2.1f} above. We will use the characterizations 
of the \pep\ in part~\ref{2.1b} and the \step\ in part~\ref{2.1e} of the previous proposition repeatedly below. 

\begin{proposition}\label{bas2} Let $R \to S$ be any ring homomorphism.
	\begin{enumerate}[label=(\alph*)]
		
		\item\label{2.2a} If $R \to S$ has the \pep\ or the \step\ and $W$ is a multiplicative system in $R$ (respectively, $V$ is a multiplicative
		system in $S$) then $W^{-1}R \to W^{-1}S$ (respectively,  $R \to V^{-1}S$) has the same property.  
		
		\item\label{2.2b} If  $\fA$ is an ideal consisting of nilpotent elements of $R$, then $R \to S$ has the \pep\ (respectively, the \step) if and 
		only if  $R/\fA  \to S/\fA S$ has that property.  
		
		\item\label{2.2c} The map $R/PR \to S/PS$ has the \pep\  (respectively, the \step) for every minimal prime $P$ of $R$ if and only if $R\to S$ has that property.
		
		\item\label{2.2d} If $S$ is a polynomial ring in any family of variables over $R$, then $R \to S$ has the \step.  
		
		\item\label{2.2e} If $R$ is Noetherian, a formal power series ring in any family of variables over $R$ has the \pep. 
		
	\end{enumerate}
\end{proposition}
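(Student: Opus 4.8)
The plan is to derive each part from the characterizations of the \pep\ and \step\ in Proposition~\ref{bas1} — especially part~\ref{2.1b} (the \pep\ is tested by surjections $R\surj D$ onto domains) and part~\ref{2.1e} (the \step\ is tested by arbitrary homomorphisms $R\to D$ to domains) — together with two elementary observations: a localization of a ring that is a domain or zero is again a domain or zero, and every homomorphism from a ring $T$ to a domain (indeed to a reduced ring) kills every nilpotent of $T$. Throughout I will use that if $A\to C$ factors through an intermediate map $A\to A'$ then $C\otimes_A B\cong C\otimes_{A'}(A'\otimes_A B)$, and that $(A/\fI)\otimes_A B\cong B/\fI B$.

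For part~\ref{2.2a}, take first $W^{-1}R\to W^{-1}S$ with $W\inc R$. For the \pep\ I would invoke part~\ref{2.1b}: a surjection $W^{-1}R\surj D$ onto a domain has kernel the expansion of the prime $P=\ker\cap R$, so $D$ is a localization of the domain $R/P$, and $D\otimes_{W^{-1}R}W^{-1}S\cong D\otimes_R S$ is a localization of $(R/P)\otimes_R S=S/PS$; the latter is a domain or zero because $R\to S$ has the \pep\ (part~\ref{2.1b} applied to $R\surj R/P$), hence so is its localization. For the \step\ the argument is shorter via part~\ref{2.1e}: a map $W^{-1}R\to D$ to a domain is in particular a map $R\to D$, and $D\otimes_{W^{-1}R}W^{-1}S\cong D\otimes_R S$ is a domain or zero. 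For $R\to V^{-1}S$ with $V\inc S$, note that for any map $R\to D$ one has $D\otimes_R V^{-1}S\cong\overline{V}^{-1}(D\otimes_R S)$, a localization of $D\otimes_R S$, and apply part~\ref{2.1b} (resp.\ part~\ref{2.1e}).

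For part~\ref{2.2b}, since every element of $\fA$ is nilpotent, every homomorphism from $R$ to a domain factors uniquely through $R/\fA$; thus the surjections (resp.\ arbitrary homomorphisms) from $R$ onto (resp.\ to) a domain $D$ correspond bijectively to those from $R/\fA$, while $D\otimes_R S\cong D\otimes_{R/\fA}(S/\fA S)$ in each case, and the equivalence follows from part~\ref{2.1b} (resp.\ part~\ref{2.1e}). For part~\ref{2.2c} the same mechanism applies, and in fact the stated equivalence holds with ``minimal'' replaced by ``arbitrary'' — minimality is simply what makes the criterion economical. If $R\to S$ has the property and $P$ is any prime, then a homomorphism $R/P\to D$ to a domain is also a homomorphism $R\to D$, so $D\otimes_{R/P}(S/PS)\cong D\otimes_R S$ is a domain or zero, whence $R/P\to S/PS$ has the property (part~\ref{2.1b} resp.\ part~\ref{2.1e}). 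Conversely, suppose the property holds for every minimal prime. For the \pep, given a prime $Q$ of $R$ pick a minimal prime $P\inc Q$; then $(Q/P)(S/PS)=QS/PS$ is prime or the unit ideal in $S/PS$, and since $PS\inc QS$ this says exactly that $S/QS\cong(S/PS)/(QS/PS)$ is a domain or zero, i.e.\ that $QS$ is prime or the unit ideal of $S$. For the \step, given a map $R\to D$ with $D$ a domain, choose a minimal prime $P$ of $R$ contained in $\ker(R\to D)$; the map factors through $R/P$, and $D\otimes_R S\cong D\otimes_{R/P}(S/PS)$ is a domain or zero by part~\ref{2.1e}.

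Part~\ref{2.2d} is immediate from part~\ref{2.1e}: if $S=R[\cF]$ is a polynomial ring in a family $\cF$ of indeterminates, then for any map $R\to D$ to a domain, $D\otimes_R S\cong D[\cF]$ is a nonzero polynomial ring over a domain, hence a domain. Part~\ref{2.2e} is the one requiring genuine work, and is where the Noetherian hypothesis enters. With $S=R[[\cF]]$ and $P$ a prime of $R$, the goal is to show $PS$ is prime (it is never the unit ideal, since $S/PS\ne0$) by identifying $S/PS$ with $(R/P)[[\cF]]$: the reduction-of-coefficients map $S\surj(R/P)[[\cF]]$ has kernel the set of power series all of whose coefficients lie in $P$, and writing $P=(\vect pk)$ — possible because $R$ is Noetherian — one expresses the coefficient of each monomial $\mu$ of such a series $f$ as $\sum_j c_{j,\mu}p_j$ and finds $f=\sum_j p_j\bigl(\sum_\mu c_{j,\mu}\mu\bigr)\in PS$; thus the kernel is exactly $PS$ and $S/PS\cong(R/P)[[\cF]]$, which is a domain since a formal power series ring over a domain is a domain. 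Hence $PS$ is prime and $R\to R[[\cF]]$ has the \pep. \textbf{The main obstacle} is exactly this identification $S/PS\cong(R/P)[[\cF]]$: the containment $PS\inc\ker$ is automatic, but for the reverse inclusion one must write an arbitrary coefficient of a series in $P$ as a \emph{finite} $R$-combination of fixed generators of $P$, which forces $P$ to be finitely generated — so the Noetherian hypothesis cannot be dropped, and part~\ref{2.2e} claims only the \pep, not the \step. Everything else in the proposition is formal bookkeeping with the criteria of Proposition~\ref{bas1}.
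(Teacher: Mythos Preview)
Your proof is correct and follows essentially the same approach as the paper: both reduce parts~\ref{2.2a}--\ref{2.2c} to the domain-testing criteria of Proposition~\ref{bas1}\ref{2.1b} and~\ref{2.1e}, and both handle parts~\ref{2.2d}--\ref{2.2e} via the identification of $S/PS$ with the corresponding polynomial or power series ring over $R/P$, invoking finite generation of $P$ in the power series case. The only cosmetic differences are that the paper appeals to Proposition~\ref{bas1}\ref{2.1f} for one direction of~\ref{2.2c} where you unpack it by hand, and that you phrase~\ref{2.2d} via~\ref{2.1e} directly rather than through the $S/PS$ picture.
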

\begin{proof} For part~\ref{2.2a}, suppose that $R\to S$ has the \pep\ and $W^{-1}R \surj D$ for some domain $D$. Since every prime of $W^{-1}R$ is expanded from $R$, we can write $D\cong W^{-1}D'$ for some domain $D'$ with $R\surj D'$. Then, by Proposition~\ref{bas1}\ref{2.1b}, $S\otimes_R D'$ is either a domain or zero, so \[W^{-1}S\otimes_{W^{-1}R} D \cong W^{-1}S\otimes_{W^{-1}R} W^{-1}D' \cong W^{-1} (S\otimes_R D')\] is either a domain or zero, so $W^{-1}R \to W^{-1}S$ has the \pep. The argument for \step\ in the same, except replacing surjections to domains with general maps to domains, and using Proposition~\ref{bas1}\ref{2.1e}. The case with $V$ in place of $W$ is similar.
	
	Part \ref{2.2b} follows from the fact all maps from $R$  to a domain $D$ must have $\fA$ in their kernel, so $S\otimes_R D\cong S/\fA S \otimes_{R/\fA} D$; thus the condition of Proposition~\ref{bas1}\ref{2.1b} holds or fails simultaneously for the two given maps, and likewise for Proposition~\ref{bas1}\ref{2.1e}.
	
	For the forward implication of \ref{2.2c}, a map $R\to D$ to a domain $D$ must contain a minimal prime $P$ in the kernel, so $S\otimes_R D\cong S/P S \otimes_{R/ P} D$ for that prime $P$, and the conclusion follows from Proposition~\ref{bas1}\ref{2.1b} and~\ref{2.1e}. The reverse follows from Proposition~\ref{bas1}\ref{2.1f}.
	
	Parts \ref{2.2d} and \ref{2.2e}
	follow from the fact that $S/PS$ may be identified with the corresponding polynomial or power series ring over $R/P$,
	since for any ideal (respectively, finitely generated ideal) $I$, the expansion $IS$ is the same as the ideal of polynomials (respectively power series) all of whose coefficients are in $I$.  Moreover, in the polynomial case, the hypothesis continues to hold after adjoining
	indeterminates to both rings.    
\end{proof}

\section{Flatness}\label{sec-main} 

We say that $T$ is a {\it geometrically reduced and irreducible} algebra over the field $K$ if for every field extension $K \inc L$, $L \otimes_K T$ is a domain.  
If $D \inc L$ is a domain, 
then $D \otimes_K T \inc L \otimes_K T$.  Thus $K \to T$ is \gri\ if and only if it has the \step.

Observe that a direct limit of \gri \ algebras is again \gri, since tensor products commute with direct limits. If $T_0 \inc T$ is a $K$-subalgebra, then
$L\otimes_K T_0 \inc L \otimes_K T$, so that $T$ is \gri\ if and only if all $K$-subalgebras are \gri, which happens if and only if all finitely generated $K$-subalgebras are
\gri.  

If $P$ is a prime ideal of $R$, we let $\kappa_P = R_P/PR_P$,  which is canonically isomorphic with the field of fractions of $R/P$.
The {\it fiber} of $R \to S$ over a prime ideal $P$ of $R$ is $\kappa_P \otimes_R S$.

\begin{proposition}\label{prop:step-gri} If $R \to S$ has \step, then for all primes $P$ of $R$,  the fiber $\kappa_P \otimes_R S$ is a \gri\ $\kappa_P$-algebra.  If, moreover,
	$R \to S$ is flat, the converse holds, i.e. if  $R \to S$ is flat, then $R \to S$ has the \step\ if and only 
	if $\kappa_P \otimes_R S$ is a \gri\ $\kappa_P$-algebra for all primes $P$ of $R$. \end{proposition}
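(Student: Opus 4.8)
The plan is to prove the two implications of Proposition~\ref{prop:step-gri} separately, relying in both directions on the characterization of the \step\ given in Proposition~\ref{bas1}\ref{2.1e}: namely that $R \to S$ has the \step\ if and only if $D \otimes_R S$ is a domain or zero for every map $R \to D$ with $D$ a domain. For the first (unconditional) assertion, fix a prime $P$ of $R$. I would like to show $\kappa_P \otimes_R S$ is \gri\ over $\kappa_P$, and by the remark preceding the proposition this is equivalent to showing $\kappa_P \to \kappa_P \otimes_R S$ has the \step. By Proposition~\ref{bas1}\ref{2.1f}, the \step\ for $R \to S$ is inherited by the base change $\kappa_P \to \kappa_P \otimes_R S$, since $\kappa_P \otimes_R S \cong (\kappa_P \otimes_R S)$ is exactly the base change of $S$ along $R \to \kappa_P$ (note $\kappa_P$ is obtained from $R$ first by localizing at $P$, which preserves the \step\ by Proposition~\ref{bas2}\ref{2.2a}, then by killing the nilpotent-free maximal ideal $PR_P$; alternatively just invoke \ref{2.1f} directly with $R' = \kappa_P$). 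This gives the first sentence immediately.

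For the converse, assume $R \to S$ is flat and that $\kappa_P \otimes_R S$ is \gri\ over $\kappa_P$ for every prime $P$; I must deduce that $R \to S$ has the \step. Using Proposition~\ref{bas1}\ref{2.1e}, let $R \to D$ be any map to a domain $D$; I need $D \otimes_R S$ to be a domain or zero. Let $P$ be the kernel of $R \to D$, a prime of $R$, and let $L = \fra(D)$ be the fraction field of $D$, so that $\kappa_P \inc L$ and $D \inc L$. Then $D \otimes_R S \inc L \otimes_R S$ precisely because $S$ is flat over $R$: tensoring the injection $D \inc L$ with the flat $R$-module $S$ keeps it injective. So it suffices to show $L \otimes_R S$ is a domain or zero. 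Now $L \otimes_R S \cong L \otimes_{\kappa_P} (\kappa_P \otimes_R S)$, and since $\kappa_P \otimes_R S$ is geometrically reduced and irreducible over $\kappa_P$, extending scalars along the field extension $\kappa_P \inc L$ yields a domain (or zero, if the fiber was zero). Hence $D \otimes_R S$, being a subring of a domain, is a domain or zero, and the \step\ follows.

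The only genuinely delicate point is the flatness step in the converse: without flatness, the map $D \otimes_R S \to L \otimes_R S$ need not be injective, and indeed the $\C\llbracket X,Y\rrbracket/(XY) \twoheadrightarrow \C\llbracket X \rrbracket$ example from the introduction shows the converse genuinely fails without it (its unique-fiber-over-the-closed-point is $\C\llbracket X \rrbracket$, a domain, yet the map lacks even the \pep\ after adjoining an indeterminate). Everything else is a bookkeeping exercise in the transitivity of base change and the definitions of \gri\ and the \step; I would keep the write-up short, citing Proposition~\ref{bas1}\ref{2.1e},\ref{2.1f} and the observation that $L \otimes_R S \cong L \otimes_{\kappa_P}(\kappa_P \otimes_R S)$ as the load-bearing facts.
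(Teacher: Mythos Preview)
Your proof is correct and follows essentially the same route as the paper's. The only cosmetic difference is that for the forward direction you invoke Proposition~\ref{bas1}\ref{2.1f} (stability of the \step\ under base change to $\kappa_P$) whereas the paper applies Proposition~\ref{bas1}\ref{2.1e} directly to the composite $R \to \kappa_P \to L$; and for the converse you justify the injection $D\otimes_R S \hookrightarrow L\otimes_R S$ by $R$-flatness of $S$ applied to $D\hookrightarrow L$, while the paper phrases it as $D$-flatness of $D\otimes_R S$ making nonzero elements of $D$ nonzerodivisors---these are the same fact.
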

\begin{proof} Assume that $R \to S$ has the \step.  If $L$ is any extension field of $\kappa_P$, we have a composite map $R \to \kappa_P \to L$,
	and $$L \otimes_{\kappa_P} (\kappa_P \otimes_R S) \cong L \otimes_R S$$ is a domain by Proposition~\ref{bas1}\ref{2.1e}. 
	
	Now assume that $R \to S$ is flat and that all fibers are \gri.  Suppose that we have a homomorphism $R \to D$, where $D$ is a domain, and
	let $P$ be the kernel. Let $L$ be the field of fractions of $D$.   Then $D \otimes_R S$ is flat over $D$. Hence, the elements of $D \smallsetminus \{0\}$ are nonzerodivisiors, 
	and to show that $D \otimes_R S$ is a domain it suffices to show that $L \otimes_R S$ is a domain. Since we have an injection $\kappa_P \inj L$,
	we have the identification $L \otimes_{\kappa_P} (\kappa_P \otimes_R S) \cong L \otimes_R S$, and this ring is a domain by the hypothesis that $\kappa_P \otimes_R S$ is \gri. \end{proof}

Our next goal is to show that under mild conditions on the reduced ring $R$, the condition that $R \to S$ has the \step\ forces the flatness of $S$ over $R$.  See
Theorem~\ref{thm:main} below. We need some preliminary results.

The result that intersecting two ideals commutes with extension from $R$ to $S$ is often stated for the case where $S$ is flat over $R$.  We prove
that the result holds under a much weaker assumption:  that for one of the ideals $\fA$,  $S/\fA S$ is flat over $R/\fA$.  In fact, we assume even less:

\begin{lemma}\label{cap}  If $R \to S$ is any ring homomorphism and $\fA,\, \fB \inc R$ are ideals such that $\Tor_1^{R/\fA}(R/(\fA +\fB),\,S/\fA S) = 0$, which
	holds, for example, if $S/\fA S$ is flat over $R/\fA$, then    $\fA S \cap \fB S = (\fA \cap \fB)S$. \smallskip
	Hence, if $\vect \fA h$ are ideals of $R$ such that $S/\fA_i S$ is flat over $R/\fA_i$ for $1 \leq i \leq h-1$,  then $\bigcap_i (\fA_i S) = (\bigcap_i \fA_i)S$.
\end{lemma}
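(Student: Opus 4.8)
The plan is to establish the two-ideal case first by a short homological argument, and then deduce the many-ideal statement by an induction on $h$. For the two-ideal case, consider the short exact sequence of $R/\fA$-modules
\[
0 \to \fB(R/\fA) \to R/\fA \to R/(\fA+\fB) \to 0,
\]
noting that $\fB(R/\fA) \cong (\fA+\fB)/\fA$ and $R/(\fA+\fB) \cong (R/\fA)\big/\big((\fA+\fB)/\fA\big)$. Tensoring over $R/\fA$ with $S/\fA S$ and using the hypothesis $\Tor_1^{R/\fA}(R/(\fA+\fB),\,S/\fA S)=0$ yields a short exact sequence
\[
0 \to \fB(R/\fA)\otimes_{R/\fA}(S/\fA S) \to S/\fA S \to R/(\fA+\fB)\otimes_{R/\fA}(S/\fA S) \to 0.
\]
The image of the left-hand term in $S/\fA S$ is $\fB(S/\fA S) = (\fB S + \fA S)/\fA S$, so exactness identifies the cokernel $R/(\fA+\fB)\otimes_{R/\fA}(S/\fA S)$ with $S/(\fA S + \fB S)$; equivalently, the kernel of $S/\fA S \to S/(\fA S+\fB S)$ is exactly $\fB(S/\fA S)$. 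Translating this back to $S$: the preimage in $S$ of that kernel is $\fA S + \fB S$ on the one hand, and on the other hand it is $\{s \in S : s \in \fB S + \fA S\}$... here I must be a little careful. The cleaner route is: the kernel of the surjection $S/\fA S \twoheadrightarrow S/(\fB S+\fA S)$ is $(\fB S + \fA S)/\fA S$, and the content of the $\Tor$-vanishing is that the natural surjection $\fB(R/\fA)\otimes_{R/\fA} S/\fA S \twoheadrightarrow \fB(S/\fA S)$ is an isomorphism, which is automatic, so the real input is elsewhere.

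Let me restate the intended argument more carefully. Apply $-\otimes_{R/\fA} S/\fA S$ to $0 \to (\fA+\fB)/\fA \to R/\fA \to R/(\fA+\fB)\to 0$; the $\Tor_1$ hypothesis gives that
\[
(\fA+\fB)/\fA \;\otimes_{R/\fA}\; S/\fA S \;\longrightarrow\; S/\fA S
\]
is injective. Now also apply $-\otimes_{R/\fA} S/\fA S$ to $0 \to \fB/(\fA\cap\fB) \to R/(\fA\cap\fB) \to R/\fB \to 0$ after base changing appropriately — actually the slickest formulation uses that $(\fA+\fB)/\fA \cong \fB/(\fA\cap\fB)$ as $R$-modules. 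Chasing the resulting diagram comparing $\fB S$, $\fA S$, and $(\fA\cap\fB)S$ inside $S$, the injectivity statement forces $\fB S \cap \fA S \subseteq (\fA\cap\fB)S$; the reverse inclusion $(\fA\cap\fB)S \subseteq \fA S\cap \fB S$ is trivial. I expect the bookkeeping in this diagram chase — keeping straight which modules map isomorphically onto which submodules of $S$ — to be the one genuinely fiddly point, though it is entirely routine once set up. The remark that $S/\fA S$ flat over $R/\fA$ implies the $\Tor_1$ vanishing is immediate from the definition of flatness.

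For the second statement, induct on $h$. The case $h=1$ (and $h=2$, which is the displayed special case with $\fB = \fA_2 \cap \cdots \cap \fA_h$) being settled, suppose $h \geq 2$ and set $\fA = \fA_1$ and $\fB = \fA_2 \cap \cdots \cap \fA_h$. The hypothesis says $S/\fA_i S$ is flat over $R/\fA_i$ for $1 \le i \le h-1$; in particular $S/\fA_1 S$ is flat over $R/\fA_1$, so the two-ideal case applies to give $\fA_1 S \cap \fB S = (\fA_1 \cap \fB)S = \big(\bigcap_{i=1}^h \fA_i\big)S$, provided we know $\fB S = \bigl(\bigcap_{i=2}^h \fA_i\bigr)S$. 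That last equality is exactly the inductive hypothesis applied to the $h-1$ ideals $\fA_2, \dots, \fA_h$, for which the required flatness conditions $S/\fA_i S$ flat over $R/\fA_i$ hold for $2 \le i \le h-1$ — these are among the hypotheses of the lemma. Assembling these gives $\bigcap_{i=1}^h(\fA_i S) = \fA_1 S \cap \bigl(\bigcap_{i=2}^h \fA_i\bigr)S = \fA_1 S \cap \fB S = \bigl(\bigcap_{i=1}^h \fA_i\bigr)S$, completing the induction. The only subtlety is making sure the index ranges line up so that the flatness hypotheses needed at each stage are precisely those granted, which they are since we only ever need flatness of $S/\fA_i S$ over $R/\fA_i$ for $i$ strictly less than the current number of ideals.
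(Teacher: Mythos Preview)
Your approach is correct and the induction for the many-ideal statement is identical to the paper's. For the two-ideal case, the paper takes a more hands-on route: it fixes $u \in \fA S \cap \fB S$, writes $u = \sum g_{\lambda_j} s_{\lambda_j}$ with the $g_\lambda$ generating $\fB$, and interprets the $\Tor_1$ vanishing as saying that the relation $\overline{u}=0$ in $S/\fA S$ is an $(S/\fA S)$-linear combination of relations on the $\overline{g_\lambda}$ coming from $R/\fA$; unwinding this explicitly exhibits $u$ as an element of $(\fA\cap\fB)S$. Your route packages the same input more abstractly: the $\Tor_1$ vanishing gives injectivity of $(\fA+\fB)/\fA \otimes_{R/\fA} S/\fA S \to S/\fA S$, and via the isomorphism $(\fA+\fB)/\fA \cong \fB/(\fA\cap\fB)$ this map factors as
\[
\fB/(\fA\cap\fB)\otimes_R S \;\twoheadrightarrow\; \fB S/(\fA\cap\fB)S \;\twoheadrightarrow\; \fB S/(\fA S\cap\fB S)\;\hookrightarrow\; S/\fA S,
\]
so the middle surjection is forced to be injective, giving $(\fA\cap\fB)S = \fA S\cap\fB S$. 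This is exactly the ``fiddly but routine'' chase you allude to; it would strengthen your write-up to display it, since as written your plan circles a bit before landing on the right formulation. The paper's element chase buys concreteness and avoids any risk of confusing which tensor products sit where inside $S$; your exact-sequence argument is shorter once the factorization above is made explicit.
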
 
\begin{proof} We use an overline to indicate images of elements in $R/\fA$ or $S/\fA S$: context should make it clear which is meant. 
	Let $u \in \fA S \cap \fB S$. Let $\{ g_{\lambda} \ | \ \lambda\in \Lambda\}$ be a generating set for $\fB$, and take a free resolution
	\[ \cdots \to (R/\fA)^{\oplus \Gamma} \xrightarrow{d_1} (R/\fA)^{\oplus \Lambda} \xrightarrow{d_0} R/\fA \to 0  \]
	for $R/(\fA+\fB)$ as an $(R/\fA)$-module, where $d_0$ maps the generator indexed by $\lambda$ to $\overline{g_{\lambda}}$. We can write 
	\[  u = \sum_{j=1}^k g_{\lambda_j} s_{\lambda_j} \in \fA S\]  with $s_{\lambda_1},\dots,s_{\lambda_k} \in S$ and $\lambda_1,\dots,\lambda_k \in \Lambda$. Let $\bs{s}$ be the vector with coordinate ${s_{\lambda_j}}$ in index $\lambda_j$, and all other coordinates zero. The relation $\overline{u}=0$ above specifies an element in $\ker(S/\fA S \otimes_{R/\fA} d_0)$, namely, the vector $\overline{\bs{s}}$ with coordinate $\overline{s_{\lambda_j}}$ in index $\lambda_j$, and all other coordinates zero.

	The vanishing
	of $$\Tor_1^{R/\fA}(R/(\fA+\fB), \,S/\fA S)$$  implies that $\overline{\bs{s}}\in \mathrm{im}(S/\fA S \otimes_{R/\fA} d_1)$, i.e., 
	the vector $\overline{\bs{s}}$ of coefficents is an {$(S/\fA S)$-linear} combination of finitely many, say $h$, vectors $\overline{\bs{r_1}},\dots, \overline{\bs{r_h}}$ with coefficients in $R/\fA$ that each give relations on the  $\ov{g_{\lambda}}$  in $R/\fA$.
	We lift each $\overline{r_i}$ to a vector $r_i$ with coefficients in $R$ by choosing arbitrary preimages for the nonzero coordinates, and zero in the zero coordinates. Note that each $\overline{\bs{r_i}}$ and hence each $r_i$, has finite support, so there is a finite subset $\Lambda_0=\{\lambda_1,\dots,\lambda_w\}$ that contains the support of all of the vectors $\bs{s},\bs{r_1},\dots,\bs{r_h}$. Then, we have
	
	\begin{enumerate}
		\item\label{item-1-proof} $\bs{s}  \equiv \sum_{i=1}^h u_i \bs{r_i}$ modulo $\fA S$ (as vectors, coordinatewise) for some elements $u_i \in S$, and
		\item\label{item-2-proof}   for every $i$,  $\sum_{\lambda \in \Lambda} {\bs{r_i}}_{\lambda} g_{\lambda} = \sum_{\lambda \in \Lambda_0} {\bs{r_i}}_{\lambda} g_{\lambda} \in  \fA$, and hence $\fA  \cap \fB$,  because of the presence of the $g_{\lambda}$.
	\end{enumerate}
	
	\noindent	From (\ref{item-1-proof}) we have 
	
	\begin{enumerate}\setcounter{enumi}{2}
		\item\label{item-3-proof} for $\lambda\in \Lambda_0$,  $s_\lambda = \sum_{i=1}^h u_i {\bs{r_i}}_{\lambda} + f_\lambda$  for some $f_\lambda \in \fA S$.
	\end{enumerate}
	
	\noindent	Then
	\[ u = \sum_{\lambda\in \Lambda_0} g_{\lambda} s_{\lambda} =  \sum_{\lambda\in \Lambda_0} g_{\lambda} \left(\sum_{i=1}^h u_i {\bs{r_i}}_{\lambda} + f_\lambda\right) =\sum_{i=1}^h u_i \left( \sum_{\lambda\in \Lambda_0} {\bs{r_i}}_{\lambda} g_\lambda \right) + \sum_{\lambda\in \Lambda_0} f_{\lambda} g_{\lambda} \]
	and the result follows because the terms in the first sum on the right are in $(\fA \cap \fB)S$ by (\ref{item-2-proof}) and each  $f_\la g_\la \in (\fA  \fB)S$.
	The final statement follows by a straightforward induction on $h$.
\end{proof}

\begin{samepage}
	\begin{proposition}\label{ker}  Let $R \to S$ have the \step. 
		\begin{enumerate}[label=(\alph*)]
			\item\label{3.3a}  If $(R,\,\fm)\to (S, \fn)$ is a local homomorphism of quasilocal rings, then the kernel is contained
			in the nilradical of $R$.  Thus, if $R$ is reduced, then the map is injective. 
			
			\item\label{3.3b} For every prime ideal $P$ of $R$, either $PS = S$ or $PS$ contracts to $P$ in $R$.
			
		\end{enumerate}
	\end{proposition}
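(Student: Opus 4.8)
The plan is to prove part~(b) first by an explicit ``bad prime'' construction, and then to deduce part~(a) from it formally.

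For~(b): suppose, for contradiction, that $P$ is a prime of $R$ with $PS\neq S$ but $PS\cap R\supsetneq P$, and choose $a\in (PS\cap R)\setminus P$, so that $a\notin P$ while the image of $a$ in $S$ lies in $PS$. Since $R\to S$ has the \step, $R[X,Y]\to S[X,Y]$ has the \pep\ (Proposition~\ref{bas1}\ref{2.1c}), so it is enough to exhibit a prime of $R[X,Y]$ whose expansion to $S[X,Y]$ is neither prime nor the unit ideal. The plan is to use
\[
\fP \;:=\; PR[X,Y]+(XY-a)\subseteq R[X,Y].
\]
First I would check $\fP$ is prime: $R[X,Y]/\fP\cong (R/P)[X,Y]/(XY-\bar a)$, where $\bar a$ is the image of $a$ in the domain $R/P$, and $\bar a\neq 0$ since $a\notin P$; this ring is a domain because $X$ is a nonzerodivisor on it and inverting $X$ produces the domain $(R/P)[X,X^{-1}]$, so $R[X,Y]/\fP$ injects into $(R/P)[X,X^{-1}]$. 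Then I would compute the expansion: we have $\fP S[X,Y]=PS[X,Y]+(XY-a)S[X,Y]$, and since the image of $a$ lies in $PS$ we get $XY-a\equiv XY\pmod{PS[X,Y]}$, whence $\fP S[X,Y]=PS[X,Y]+(XY)S[X,Y]$ and $S[X,Y]/\fP S[X,Y]\cong (S/PS)[X,Y]/(XY)$. Because $PS\neq S$, the ring $S/PS$ is nonzero, so this quotient is nonzero but not a domain (the classes of $X$ and $Y$ are nonzero yet multiply to $0$). This is the contradiction, proving~(b).

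For~(a): under the hypotheses of~(a) every prime $P$ of $R$ is contained in $\fm$, so $PS\subseteq\fm S\subseteq\fn\subsetneq S$ and hence $PS\neq S$; thus part~(b) gives $PS\cap R=P$ for \emph{every} prime $P$ of $R$. Since $0\in PS$, the kernel of $R\to S$ is contained in $PS\cap R=P$ for every prime $P$, and intersecting over the minimal primes of $R$ gives $\ker(R\to S)\subseteq\fN_R$. When $R$ is reduced this says $R\to S$ is injective.

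The step I expect to require the most thought is the shape of the bad prime. A single extra indeterminate does not suffice: the naive candidate $PR[X]+(X^2-a)$ is prime only when $\bar a$ is not a square in $\mathrm{Frac}(R/P)$, and this can fail completely (for instance when $\mathrm{Frac}(R/P)$ is algebraically closed, no single-variable polynomial relation involving $a$ will work). The virtue of $XY-a$ is that it is irreducible over every field in which $a\neq 0$, while its reduction $XY$ is \emph{never} radical; this makes the argument insensitive to the characteristic and to the arithmetic of the residue fields. The one technical wrinkle inside the argument is precisely the assertion that $XY-\bar a$ generates a \emph{prime} of $(R/P)[X,Y]$ (and not merely that it generates a prime after tensoring with $\mathrm{Frac}(R/P)$), for which the nonzerodivisor-plus-localization observation above is the cleanest route.
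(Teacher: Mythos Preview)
Your argument is correct, but it is organized quite differently from the paper's. The paper proves (a) first and then deduces (b) from it, while you do the reverse.

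For (a), the paper takes a non-nilpotent $a$ in the kernel, passes to the quotient by a prime of $R$ avoiding $a$ (so that $R$ and $S$ become domains with $a\neq 0$ in $R$ but $a=0$ in $S$), and then uses the polynomial $X^2-aY$: this generates a prime of $R[X,Y]$ (checked via the division algorithm in $\mathcal{F}(Y)[X]$), but expands to $X^2S[X,Y]$, which is not prime. For (b), the paper localizes: if $PS\neq S$ contracts to $Q$, then $(R/P)_Q\to (S/PS)_{Q'}$ still has the \step\ and is local with reduced source, so (a) forces injectivity, whence $Q=P$.

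Your route folds the ``pass to a domain'' step directly into the construction of the bad prime for (b), using $\fP=PR[X,Y]+(XY-a)$ with $a\in(PS\cap R)\setminus P$, and your choice of $XY-a$ (rather than $X^2-aY$) makes the primeness check a one-line localization at $X$. Your deduction of (a) from (b) is then purely formal and avoids any localization. Both approaches need two indeterminates for the same reason you identify at the end, and both are insensitive to characteristic; the main trade-off is where the work is concentrated---in your version all of it sits in (b), while in the paper's it sits in (a). The minor citation of Proposition~\ref{bas1}\ref{2.1c} for the fact that $R[X,Y]\to S[X,Y]$ has the \pep\ is slightly off; this is simply the definition of the \step.
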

\end{samepage}
\begin{proof}
	For part \ref{3.3a}, suppose that $a \in R$ is in the kernel and not nilpotent.  By forming the quotient by a prime of $R$ not containing $a$ and the extension of this
	prime to $S$, we obtain an example where $R$ and $S$ are domains and $a$ is a nonzero element in the kernel, using Proposition~\ref{bas1}\ref{2.1f}.
	Adjoin two indeterminates $X,\, Y$ to both rings.  Then $X^2 - aY$ is prime in $R[X,Y]$:  this is true in $\cF(Y)[X]$,  where
	$\cF$ is the fraction field of $R$, and the contraction of $(X^2-aY)\cF(Y)[X]$ to $(R[Y])[X]$ is generated by $X^2 - aY$, by the
	division algorithm for monic polynomials.  The expansion of the prime $(X^2-aY)R[X,Y]$ to $S[X,Y]$ is $X^2S[X,Y]$,  a contradiction.
	
	For part \ref{3.3b}, suppose $Q' = PS \not= S$ contracts to $Q$ in $R$.  Then the map $(R/P)_Q \to (S/PS)_{Q'}$ has the \step\ by Proposition~\ref{bas2}\ref{2.2a}, and so must
	be injective by part \ref{3.3a}.  But $(Q/P)R_Q$ is in the kernel, so that $Q = P$. \end{proof}

We need one more small observation for the proof of the main theorem.

\begin{lemma}\label{graded-nzd}
	Let $A$ be an $\N$-graded ring with $(A_0,\fm_0)$ quasilocal. Denote by ${\fA=\fm_0  + A_{>0}}$ the unique maximal homogeneous ideal. Then every element of $A\smallsetminus \fA$ is a nonzerodivisor on $A$ and on every $\Z$-graded $A$-module.
\end{lemma}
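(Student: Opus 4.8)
The plan is to peel off the unit part of an arbitrary element and then argue with the bottom homogeneous component. First I would take $a \in A \smallsetminus \fA$ and write $a = a_0 + a_+$ with $a_0 \in A_0$ its degree-$0$ component and $a_+ \in A_{>0}$. Since $a \notin \fA = \fm_0 + A_{>0}$, the component $a_0$ is not in $\fm_0$, hence is a unit of $A_0$, and therefore a unit of $A$, because $(A_0,\fm_0)$ is quasilocal. Setting $b = a_0^{-1}a_+ \in A_{>0}$ gives $a = a_0(1+b)$, and since multiplication by the unit $a_0$ is an automorphism of every $A$-module, it suffices to show that multiplication by $1+b$ is injective on an arbitrary $\Z$-graded $A$-module $M$; the assertion for $A$ itself is then the case $M = A$.

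For this, suppose $m \in M$ satisfies $(1+b)m = 0$ and $m \neq 0$, and let $m = m_d + m_{d+1} + \cdots$ be its decomposition into homogeneous components, where $d$ is the least degree in which a nonzero component occurs, so $m_d \neq 0$. Because $b$ is a finite sum of homogeneous elements of strictly positive degree, every homogeneous component of $bm$ has degree $\geq d+1$, so the degree-$d$ component of $(1+b)m = m + bm$ is exactly $m_d$. From $(1+b)m = 0$ we conclude $m_d = 0$, a contradiction; hence $m = 0$, and multiplication by $1+b$ is injective.

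I do not expect a genuine obstacle here: the proof is these two short steps. The only point where the hypotheses are actually used is the inference "$a_0 \notin \fm_0 \Rightarrow a_0$ is a unit," which is precisely where quasilocality of $A_0$ enters; the rest is the elementary observation that adding an element of strictly positive degree cannot kill the lowest-degree homogeneous component of a nonzero element of a $\Z$-graded module. One minor thing to keep in mind is that a $\Z$-graded module may be nonzero in arbitrarily negative degrees, but each individual element still has only finitely many nonzero homogeneous components, so the least degree $d$ is well defined.
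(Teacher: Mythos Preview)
Your argument is correct and is essentially the paper's own proof: both decompose $a$ into its degree-zero unit part plus a higher-degree part, then observe that multiplication by (the unit-normalized) $a$ preserves the lowest nonzero homogeneous component of any element. The only cosmetic difference is that you factor $a = a_0(1+b)$ explicitly, while the paper instead multiplies the relation $au=0$ by $a_0^{-1}$; the key step is identical.
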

\begin{proof}
	If $f\in A\smallsetminus \fA$, write $f=f_0+f'$, with $f_0\in A_0 \smallsetminus \fm_0$ and $f'\in A_{>0}$. There is some $h\in A_0$ with $h f_0=1$. If $fu=0$ 
	with $u \neq 0$, let $v$ be the lowest degree term of~$u$. Then $0= hfu = v + \text{higher degree terms}$, and so $v=0$, a contradiction.
\end{proof}

\begin{theorem}\label{thm:main} Let $R$ be a reduced ring such that every maximal ideal contains only finitely many minimal primes.
	If $R \to S$ has the \step, then $S$ is flat over $R$.\end{theorem}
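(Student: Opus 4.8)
The plan is to reduce to the quasilocal case and then check flatness by the local criterion, i.e.\ by showing $\Tor_1^R(R/I, S) = 0$ for all finitely generated ideals $I$. First I would localize: flatness is a local property, and by Proposition~\ref{bas2}\ref{2.2a} the \step\ passes to localizations at primes of $R$ and, after that, to localizations of $S$ at maximal ideals lying over a given maximal ideal of $R$, so it suffices to treat a local homomorphism $(R,\fm)\to(S,\fn)$ with $R$ reduced and $\fm$ containing only finitely many minimal primes $P_1,\dots,P_t$ of $R$. By Proposition~\ref{ker}\ref{3.3a} the map $R\to S$ is then injective.

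The heart of the argument should be an induction on $t$, the number of minimal primes below $\fm$. When $t=1$, $R$ is a domain, and the flatness of $S$ over $R$ in the domain case is exactly Picavet's result quoted in the introduction (or can be recovered directly: since $R\to S$ is torsion-free by injectivity together with Proposition~\ref{prop:step-gri}/Proposition~\ref{bas1}\ref{2.1e} applied to $R\hookrightarrow \cF$, and one checks the valuative/local criterion using that each fiber is \gri). For the inductive step with $t>1$, choose a minimal prime, say $P_1$, set $\fA = P_1$ and $\fB = P_2\cap\cdots\cap P_t$, so that $\fA\cap\fB = \fA\fB = (0)$ since $R$ is reduced (the $P_i$ are pairwise incomparable and their intersection is the nilradical). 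Both $R/\fA$ and $R/\fB$ are reduced with strictly fewer minimal primes below the image of $\fm$, so by induction $S/\fA S$ is flat over $R/\fA$ and $S/\fB S$ is flat over $R/\fB$. Now I would use Lemma~\ref{cap}: since $S/\fA S$ is flat over $R/\fA$, we get $\fA S\cap\fB S = (\fA\cap\fB)S = 0$, which gives an exact sequence
\[
0 \to S \to S/\fA S \oplus S/\fB S \to S/(\fA+\fB)S \to 0,
\]
realizing $S$ as a pullback. The first two nontrivial terms are flat over $R$ via $R\to R/\fA$ and $R\to R/\fB$ (flat base changes of flat modules). The remaining point is to deduce flatness of $S$ over $R$ from this short exact sequence — for that one wants flatness, or at least vanishing of enough $\Tor$, of $S/(\fA+\fB)S$ over $R$; here $R/(\fA+\fB)$ is again reduced with fewer minimal primes below $\fm$ (it is a further quotient), so $S/(\fA+\fB)S$ is flat over $R/(\fA+\fB)$ by induction, hence an argument on $R$-modules through the long exact $\Tor$ sequence forces $\Tor_1^R(R/I,S)=0$.

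I expect the main obstacle to be precisely the last deduction: extracting flatness of the pullback $S$ over $R$ from flatness of the three other corners. The naive long exact sequence in $\Tor^R_\bullet(-,S)$ has the shape $\Tor_1^R(-,S/\fA S)\oplus\Tor_1^R(-,S/\fB S)\to \Tor_1^R(-,S)\to \Tor_0^R(-,S) \to \cdots$, which bounds $\Tor_1^R(-,S)$ only up to a subquotient of $\Tor_2^R(-,S/(\fA+\fB)S)$, so one must be careful about dimension shifting and may need to iterate, or instead argue via the local criterion testing against $R/\fm$ only, using that $\fm$-depth considerations and Lemma~\ref{graded-nzd}-type nonzerodivisor facts let one replace the general $I$ by $\fm$. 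An alternative route around this obstacle, and perhaps the cleaner one, is to prove flatness fiberwise: combine Proposition~\ref{prop:step-gri} (all fibers are \gri) with the structural decomposition above and the fact that $S$, as a subring of a product of modules flat over the various $R/P_i$, is torsion-free over $R$ in a suitable sense, then invoke a flatness criterion over reduced rings with finitely many minimal primes (each localization at a minimal prime being a field). Either way, the combinatorial/bookkeeping with the minimal primes and the repeated use of Lemma~\ref{cap} to split intersections is where the real work lies; the reductions at the start are routine.
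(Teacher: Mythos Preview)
Your reduction to the quasilocal case and the use of Lemma~\ref{cap} to obtain $\fA S\cap\fB S=0$ are fine, but the inductive step breaks at the sentence ``The first two nontrivial terms are flat over $R$ via $R\to R/\fA$ and $R\to R/\fB$ (flat base changes of flat modules).''  Flatness of $S/\fA S$ over $R/\fA$ does \emph{not} give flatness over $R$; in fact $R/\fA$ itself is essentially never $R$-flat here.  Take $R=K[x,y]/(xy)$ with $\fA=(x)$: then $R/\fA\cong K[y]$ and $\Tor_1^R(R/(x),R/(y))\cong K\neq 0$.  With $S/\fA S$ flat over $R/\fA$ one only gets $\Tor_i^R(N,S/\fA S)\cong \Tor_i^R(N,R/\fA)\otimes_{R/\fA}S/\fA S$, which is nonzero in general, so the long exact sequence attached to your pullback $0\to S\to S/\fA S\oplus S/\fB S\to S/(\fA+\fB)S\to 0$ gives no control on $\Tor_1^R(-,S)$.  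There is a second problem: $R/(\fA+\fB)$ need not be reduced (two smooth branches meeting tangentially, e.g.\ $R=K[x,y]/\bigl(y(y-x^2)\bigr)$ with $P_1=(y)$, $P_2=(y-x^2)$, gives $R/(P_1+P_2)\cong K[x]/(x^2)$), so your induction hypothesis does not apply to the third term either.  The ``alternative routes'' you sketch do not address these points.

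The paper avoids this entirely.  Instead of splitting $R$ along minimal primes, it encodes the injectivity of $I\otimes_R S\to S$ for all $I$ at once as the injectivity of the single graded map $\alpha:R[It]\otimes_R S\to S[It]$.  The key step is to show that the set $W=R\smallsetminus\bigcup_i P_i$ of nonzerodivisors of $R$ consists of nonzerodivisors on $T=R[It]\otimes_R S$: if $r\in W$ kills a form $\phi$ in $T$, then, since $R[It]\to T$ again has the \step\ and $r\notin Q_iT$ for $Q_i=P_iR[t]\cap R[It]$, one gets $\phi\in\bigcap_i Q_iT$; Lemma~\ref{cap} (applied after localizing at the homogeneous maximal ideals, using Lemma~\ref{graded-nzd}) turns this into $(\bigcap_i Q_i)T=0$.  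Once $W$ acts by nonzerodivisors, one localizes at $W$: then $R$ becomes a finite product of fields, $I$ becomes $0$ or the unit ideal, and injectivity of $\alpha$ is trivial.  So the paper's induction variable $h$ is really a bookkeeping device for this nonzerodivisor argument, not for a Mayer--Vietoris-type splitting of $S$.
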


\begin{proof}  First, flatness is local on the maximal ideals of $S$ and their contractions to $R$.  Hence, by Proposition~\ref{bas2}\ref{2.2a}, we may assume that
	$(R, \fm) \to (S, \fn)$ is an injective local homomorphism of quasilocal rings that has the \step, and that $(R,\fm)$ is reduced with
	finitely many minimal primes $\vect P h$.  We proceed by induction on $h$.  
	
	We need only show that $\Tor_1^R(R/I, \, S) = 0$ for all ideals $I$ of $R$, which is equivalent to the injectivity of $I \otimes S \to S$. 
	Consider $\alpha: R[It]\otimes_R S \surj S[It]$. It will suffice to prove the injectivity of the map $\alpha$: the fact that $\alpha$ is injective in degree one is exactly
	what we need.  If $h=1$,  both rings are domains containing $S$ and, therefore, $R$, and so the (non)injectivity of the map $\alpha$ is unaffected by localizing at $R\smallsetminus \{0\}$.  But then the isomorphism is clear, since $I$ becomes either the zero ideal or the unit ideal, so $\alpha$ identifies with $R\otimes_R S  \xrightarrow{\cong} S$ or $R[t] \otimes_R S \xrightarrow{\cong} S[t]$.
	
	Now assume that $k \geq 2$.  We shall prove
	$W=R \smallsetminus \bigcup_j P_j$ consists of nonzerodivisors on $R[It]\otimes_R S$. Assuming this, by Proposition~\ref{bas2}\ref{2.2a} we may localize both rings at $W$ preserving the \step\ without affecting (non)injecivity of $\alpha$ and so reduce to the case where 
	$R$ becomes a finite product of fields, and $S$ becomes a product of algebras over these fields.  The injectivity
	of $\alpha$ is local on the prime ideals of $R$.  But after localization, $I$ becomes either the zero ideal or the unit ideal, and we
	have injectivity in either case, as above.
	
	It remains to show that if $r \in W$,  then  $r$ is not a zerodivisor on $T = R[It] \otimes_R S$. We consider the $\N$-grading on $T$ induced by the usual grading on $R[It]$ (by giving $R$ degree zero and $t$ degree one) and giving $S$ degree zero. Under this grading, $T_0\cong S$ is quasilocal and thus $T$ has a unique maximal homogeneous ideal $\fN$. If $r$ is a zerodivisor on $T$, then we have a nonzero form $\phi$ in the annihilator of $r$ in~$T$.

	Let $Q_i := P_iR[t] \cap R[It]$.   Clearly, the intersection of the $Q_i$ is also zero, since it is contained in $\bigcap_i (P_i R[t]) =(\bigcap_i P_i) R[t] = 0$.  Now, ${R[It] \to S\otimes_R R[It] = T}$ has the \step\ by Proposition~\ref{bas1}\ref{2.1f},  and since $r$ is not in any $Q_iT$ and these
	ideals are prime, it follows that $\phi \in \bigcap_i Q_iT$, and it suffices to show that this intersection is zero.  To prove this, we may
	localize at the respective homogeneous maximal ideals $\fM$, $\fM'$ of $R[It]$ and $T$: by Lemma~\ref{graded-nzd}, the multiplicative systems that become inverted do not
	contain any zerodivisors, so $\bigcap_i Q_iT$ injects into its localization at $\fM'$. But we may then apply Lemma~\ref{cap} to the homomorphism
	$R[It]_\fM \to T_{\fM'}$, which has the \step\ by Proposition~\ref{bas2}\ref{2.2a},  and to the ideals $Q_iR[It]_\fM$.  We have
	$$\bigcap_i Q_iT \inc  \bigcap_i Q_iT_{\fM'} = \Big(\bigcap_i {Q_i}_\fM\Big) T_{\fM'} = \Big(\bigcap_i Q_i\Big)_\fM T_{\fM'} = 0,$$
	as required. \end{proof}

\begin{theorem}\label{main2} Let $h:R \to S$ be a ring homomorphism.  If $h$ has the \step, then all fibers are geometrically reduced and irreducible.
	Under the hypothesis that all fibers are geometrically reduced and irreducible, the following are equivalent:
	\begin{enumerate}[label=(\roman*)]
		\item\label{2.6a}  $R \to S$ has the \step. 
		\item\label{2.6b} $R/\fN_R \to S/\fN_R S$ has the \step, where $\fN_R$ is the nilradical of $R$.   
		\item\label{2.6c} For every minimal prime $P$ of $R$,  $R/P \to S/PS$ is flat. 
		\item\label{2.6d} For every finite intersection $J$ of primes of $R$,  $R/J \to S/JS$ is flat. 
	\end{enumerate} 
\end{theorem}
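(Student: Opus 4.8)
The plan is to treat the first sentence as already established---it is exactly the forward half of Proposition~\ref{prop:step-gri}, so nothing new is needed there---and to prove the equivalence of the four conditions by running the cycle (i)$\,\Rightarrow\,$(iv)$\,\Rightarrow\,$(iii)$\,\Rightarrow\,$(i) together with the separate equivalence (i)$\,\Leftrightarrow\,$(ii). Throughout the equivalence part I keep the standing hypothesis that every fiber $\kappa_P\otimes_R S$ is \gri. The equivalence (i)$\,\Leftrightarrow\,$(ii) in fact requires no hypothesis: $\fN_R$ consists of nilpotent elements, so Proposition~\ref{bas2}\ref{2.2b} applied with $\fA=\fN_R$ asserts verbatim that $R\to S$ has the \step\ if and only if $R/\fN_R\to S/\fN_R S$ does.

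For (i)$\,\Rightarrow\,$(iv) I would fix a finite intersection $J=P_1\cap\cdots\cap P_n$ of primes of $R$ and record two features of $\ov R:=R/J$: it embeds into $\prod_i R/P_i$, hence is reduced, and every prime of $R$ containing $J$ contains some $P_i$, so $\ov R$ has at most $n$ minimal primes and in particular every maximal ideal of $\ov R$ contains only finitely many minimal primes. By Proposition~\ref{bas1}\ref{2.1f} applied to $R\to\ov R$, the base change $\ov R\to\ov R\otimes_R S\cong S/JS$ still has the \step; hence Theorem~\ref{thm:main} applies and shows $S/JS$ is flat over $\ov R$. The implication (iv)$\,\Rightarrow\,$(iii) is then immediate, a single minimal prime of $R$ being a one-term finite intersection of primes.

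For (iii)$\,\Rightarrow\,$(i) I would use Proposition~\ref{bas2}\ref{2.2c} to reduce to showing that $R/P\to S/PS$ has the \step\ for each minimal prime $P$ of $R$. Fixing such a $P$, hypothesis (iii) makes $R/P\to S/PS$ flat, so by the converse half of Proposition~\ref{prop:step-gri} it is enough to check that all fibers of $R/P\to S/PS$ are \gri. The point is a routine fiber identification: a prime of $R/P$ is $Q/P$ with $Q\supseteq P$ prime in $R$, its residue field is $\kappa_{Q/P}\cong\kappa_Q$, and from $S/PS\cong S\otimes_R(R/P)$ one gets $\kappa_{Q/P}\otimes_{R/P}(S/PS)\cong\kappa_Q\otimes_R S$, which is the fiber of $R\to S$ over $Q$ and hence \gri\ by hypothesis. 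This closes the cycle.

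I expect the proof to be essentially bookkeeping on top of Proposition~\ref{prop:step-gri} and Theorem~\ref{thm:main}. The only two steps that take a moment's thought are recognizing that a finite intersection of primes yields a reduced quotient ring with finitely many minimal primes---precisely the hypothesis needed to invoke Theorem~\ref{thm:main} in (i)$\,\Rightarrow\,$(iv)---and the fiber identification $\kappa_{Q/P}\otimes_{R/P}(S/PS)\cong\kappa_Q\otimes_R S$ used in (iii)$\,\Rightarrow\,$(i); the hypothesis that the fibers are \gri\ enters only in that last implication. I do not foresee any genuine obstacle.
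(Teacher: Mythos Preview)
Your proof is correct and follows essentially the same route as the paper: the same cycle (i)$\,\Rightarrow\,$(iv)$\,\Rightarrow\,$(iii)$\,\Rightarrow\,$(i), the separate equivalence (i)$\,\Leftrightarrow\,$(ii) via Proposition~\ref{bas2}\ref{2.2b}, and the same invocations of Proposition~\ref{bas1}\ref{2.1f}, Theorem~\ref{thm:main}, and Proposition~\ref{bas2}\ref{2.2c}. The paper's write-up of (iii)$\,\Rightarrow\,$(i) is terser---it simply asserts that $R/P\to S/PS$ has the \step---whereas you spell out why, via the converse half of Proposition~\ref{prop:step-gri} and the fiber identification $\kappa_{Q/P}\otimes_{R/P}(S/PS)\cong\kappa_Q\otimes_R S$; that is exactly the intended justification.
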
 

\begin{proof}  The first statement was already shown in Proposition~\ref{prop:step-gri}. The equivalence of \ref{2.6a} and \ref{2.6b} is Proposition~\ref{bas2}\ref{2.2b}. If $R$ has the \step, then for any ideal $J \inc R$ that is a finite intersection of primes, $R/J$ has finitely many minimal primes, and the map $R/J \to S/JS$ has  the \step\ by Proposition~\ref{bas1}\ref{2.1f}, and hence is flat by Theorem~\ref{thm:main}. Thus, \ref{2.6a} implies \ref{2.6d}. The implication \ref{2.6d} implies \ref{2.6c} is trivial. If \ref{2.6c} holds, then $R/P\to S/PS$ has the \step\ for every minimal prime $P$, and, hence, \ref{2.6a} follows by Proposition~\ref{bas2}\ref{2.2c}.
\end{proof}

\section{A counterexample when a quasilocal ring  has  infinitely many minimal primes}\label{sec:example} \medskip

We construct $R \inj S$ local with $(R, \fm, K)$, $R,S$ quasilocal, $R$ reduced, and $S$ not flat over $R$
such that  $R \inj S$ has the \step. To do this, we will construct a non-flat module satisfying $PM=M$ for every minimal prime $P$ of $R$, 
and take $S$ to be the Nagata idealizer $R\ltimes_RM$ of $M$, which is defined to be $R \oplus M$ with multiplication $(r \oplus m)(r' \oplus m') = rr' \oplus (rm' + r'm)$,  so that $M^2 = 0$. 

We first construct an example where $R$ is $\N$-graded over a field $K$ and $S$ is $\Z$-graded.  We may then localize.  
Let $(\Sigma, \ins)$ denote a partially ordered set with the following properties: \medskip

\begin{enumerate}
	\item $\Sigma$ is nonempty.  
	
	\item For all $\sigma \in \Sigma$, the set $\{\tau \in \Sigma: \sigma \ins \tau\}$ is finite and totally ordered.  This implies that for every element
	$\tau$, there is a unique  minimal element $\tau_0$ of $\Sigma$ with $\tau_0 \ins \tau$, and that every element $\tau \in \Sigma$ that is not minimal has a unique
	immediate predecessor, which we denote $\tau_{-}$.  We also write $\sigma \nx \tau$ to mean that $\sigma = \tau_{-}$.  
	
	\item For every  $\sigma \in \Sigma$, there exist incomparable elements $\tau, \, \tau'$ such that $\sigma = \tau_{-} = \tau'_{-}$. 
\end{enumerate}  

The {\it height}   $\fh(\sigma)$ of $\sigma$  is the length of a maximal chain of elements descending from $\sigma$ and is one less than the cardinality
of $\{\tau \in \Sigma: \sigma \ins \tau\}$, since this set is such a chain.  We let $\Sigma_+$ denote the set of nonminimal elements of $\Sigma$.  
Let $(\Sigma, \ins)$ be a partially ordered set satisfying (1), (2), and (3) above.  Let $K$ be a field.  Let $\{X_\sigma: \sigma \in \Sigma_+\}$
be indeterminates over $K$.  Let $R = {K[X_\sigma: \sigma \in \Sigma_+]/\fI}$, where $\fI$ is the ideal generated by the products $X_\sigma X_\tau$ where $\sigma$ and
$\tau$ are incomparable.   We let $x_\sigma$ denote the image of $X_\sigma$ in $R$.   Note that the indices of variables not in a given prime ideal $Q$ must
be linearly ordered (if two were incomparable, their product is $0 \in Q$, and so at least one of them is in $Q$).  It follows at once that the minimal primes of $R$
correspond bijectively to the maximal chains $\Gamma$ in $\Sigma$, where $\Gamma$ corresponds  to
$P_\Gamma = (\{x_\gamma: \gamma \notin \Gamma\})\subset R$.  

There is a $K$-basis for $R$ consisting of products of powers of variables whose indices form a chain in $\Sigma$.

\begin{example}  If $S$ is a set with two or more elements, the set $\Sigma$ of finite sequences (including the empty sequence) of elements of $S$ with the relation that $\sigma \ins \tau$ when $\sigma$ is an initial segment of $\tau$ is an example of such a partially ordered set. In this example,  the empty sequence is the
	unique minimal element. If the empty sequence is omitted, the one element  sequences are minimal. In this example, $\sigma_-$ is the initial segment of $\sigma$  that omits the last term of $\sigma$, and the height of $\sigma$ is its length as a sequence. The minimal primes of $\fI$ are in bijection with $\N$-indexed sequences of elements of $S$. 
	
	In fact, if $S$ has two elements, the poset $\Sigma$ of this example is a subposet (up to relabeling) of any poset satisfying the three conditions above.
\end{example}

Let $\{U_\sigma: \sigma \in \Sigma\}$ be a free basis for a free $R$-module $\cM$,  and let $M$ denote the quotient of $\cM$ by the submodule spanned
by the set of elements 
\[\{\,U_{\sigma}- x_\tau U_{\tau} \,:\, \tau \in \Sigma \hbox{\ and\ } \sigma = \tau_-\,\}.\]  Let $u_\sigma$ denote the image of $U_\sigma$ in $M$.

We shall show that $R$ \and $M$, suitably localized, have the required properties.  We first explore what happens in the graded case. We introduce a ``multigrading"   as follows:  the index set will be $\Z^{\oplus\Sigma}$, the free abelian group on the elements of $\Sigma$.  The degree of $x_\sigma$ is $\sigma$.
The degree of $u_\sigma$ is $- \sum_{\tau \ins \sigma} \tau$.  Since the defining relations $X_{\sigma} X_{\tau}$ for $\sigma,\tau$ incomparable and 
$U_{\tau_-} - x_\tau U_\tau$ for all $\tau$ are multihomogeneous,  we obtain compatible gradings on $R$ and $M$. These gradings yield an $\N$-grading on 
$R$, with $R_0=K$, and a $\Z$-grading on $S$ by summimg the components of the respective multidegrees.

We first give a concrete description of $M$.

\begin{proposition}\label{description-M}  With the notations introduced above, we have the following:
	\begin{enumerate}[label=(\alph*)]
		
		\item\label{4.2a} $x_\sigma u_\tau = 0$ in $M$ unless $\sigma \ins \tau$,  and $x_\sigma^2 u_\sigma = 0$. 
		
		\item\label{4.2b} If $$\tau_{k-1} \nx \tau_k \nx \tau_{k+1} \nx \,  \cdots \, \nx \tau_{n-1} \nx \tau_n$$ is a strict saturated chain of elements
		of $\Sigma$ and $a_i$ is a positive integer for all $i$ with  ${k \leq i \leq n}$, then 
		$$x_{\tau_k}^{a_k}  x_{\tau_{k+1}}^{a_{k+1}}\, \cdots \,  x_{\tau_n}^{a_n} u_{\tau_n} = 0$$ if any $a_i \geq 2$,  and is equal
		to $u_{\tau_{k-1}}$ if $a_k = a_{k+1} = \cdots = a_n = 1$.    
		
		\item\label{4.2c} The annihilator in $R$ of $u_\tau \in M$ is the monomial ideal generated by all $x_\sigma$ such
		$\sigma \inns \tau$, and all products $x^2_{\tau_k} x_{\tau_{k+1}}\,\cdots\, x_{\tau_{n-1}}x_{\tau_n}$
		with $$\tau_k \nx \tau_{k+1} \nx\, \cdots \, \nx \tau_{n-1} \nx \tau_n$$ in $\Sigma_+.$  
		
		\item\label{4.2d} A  $K$-basis for $M$ is given by products $\mu u_\tau$,  where $\mu$ is a monomial in the variables  $x_{\sigma}$ such that $\sigma \inns \tau$.  
		Moreover, the basis elements have mutually distinct multidegrees.  
		
		\item\label{4.2e}  For all minimal primes $P$ of $R$,  $M = PM$. Moreover, for any $\sigma \in \Sigma_+$,  $(\Ann_R \ (x_\sigma))M = M$. 
		
		\item\label{4.2f} $M$ is not $R$-flat. 
	\end{enumerate}
\end{proposition}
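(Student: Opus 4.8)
The plan is to assume $M$ is flat over $R$ and derive a contradiction from part~\ref{4.2e}. The set $\Sigma_+$ is nonempty (by the third condition on $\Sigma$, every element of the nonempty poset $\Sigma$ has a successor, which is non-minimal), so I would fix some $\sigma \in \Sigma_+$ and work with the single element $r = x_\sigma \in R$. The tautological exact sequence $0 \to \Ann_R(r) \to R \to rR \to 0$ stays exact after applying $-\otimes_R M$ because $M$ is flat; moreover the inclusion $rR \hookrightarrow R$ also remains injective after $\otimes_R M$, so the composite $M \to rR\otimes_R M \hookrightarrow M$ is exactly multiplication by $r$. Reading off kernels, $\ker(r\colon M\to M)$ is the image of $\Ann_R(r)\otimes_R M \to M$, that is, $\Ann_R(r)\,M$. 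But part~\ref{4.2e} gives $\Ann_R(x_\sigma)\,M = M$, so this would force $x_\sigma M = 0$.

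The remaining step is to observe that $x_\sigma M \neq 0$, which is immediate from the earlier parts already in hand: by part~\ref{4.2b} one has $x_\sigma u_\sigma = u_{\sigma_-}$, and $u_{\sigma_-} \neq 0$ because it occurs among the $K$-basis vectors of $M$ exhibited in part~\ref{4.2d} (equivalently, its annihilator, computed in part~\ref{4.2c}, is a proper monomial ideal). This contradicts $x_\sigma M = 0$, so $M$ is not $R$-flat.

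I expect no serious obstacle; the only point requiring care is conceptual, namely to use the single-element consequence of flatness $\ker(r\colon M\to M) = \Ann_R(r)\,M$ together with the \emph{second} assertion of part~\ref{4.2e}, rather than the assertion $M = PM$ for minimal primes $P$. The minimal-prime statement alone is not enough to contradict flatness at this stage, since $R$ is not yet local and $M$ could a priori be supported entirely at non-minimal primes; it is the element $x_\sigma$, whose annihilator already expands $M$ to all of $M$ while $x_\sigma M$ itself is nonzero, that does the work. Once the right statement is isolated, exactness of the tensored sequence and the nonvanishing of $u_{\sigma_-}$ finish the argument.
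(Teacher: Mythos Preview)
Your argument is correct and starts exactly where the paper does: from the flatness consequence $\Ann_M(x_\sigma) = (\Ann_R(x_\sigma))M$ obtained by tensoring $0 \to \Ann_R(x_\sigma) \to R \xrightarrow{x_\sigma} R$ with $M$. The difference is only in how the contradiction is reached. The paper does not invoke the second assertion of part~\ref{4.2e}; instead it writes down the explicit generators of $\Ann_R(x_\sigma)$ (the $x_\tau$ with $\tau$ incomparable to $\sigma$), takes the specific nonzero element $x_\sigma u_{\sigma_1} \in \Ann_M(x_\sigma)$ for some $\sigma_1$ with $\sigma \nx \sigma_1$, and then uses the multigrading and the basis $\cB$ to argue directly that this element cannot lie in $(\Ann_R(x_\sigma))M$. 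Your route is shorter: feeding in $(\Ann_R(x_\sigma))M = M$ from part~\ref{4.2e} collapses the identity to $x_\sigma M = 0$, which is killed by the single observation $x_\sigma u_\sigma = u_{\sigma_-} \neq 0$. Your version makes part~\ref{4.2e} do the work that the paper does by hand with the multidegree bookkeeping; the paper's version, on the other hand, is self-contained in the sense that it does not rely on the full strength of~\ref{4.2e}. Either way the underlying mechanism is the same.
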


\begin{proof}
	
	For part \ref{4.2a}, note that if $\sigma\not\ins \tau$, then either $\sigma,\tau$ are incomparable or ${\tau\inns \sigma}$. If $\sigma,\tau$ are incomparable, and 
	$\gamma$ is an immediate successor 
	of $\tau$, then $\sigma,\gamma$ are incomparable. Then $x_{\sigma}u_{\tau}=x_\sigma (x_{\gamma} u_{\gamma})=0$. Similarly, if $\tau\inns \sigma$, there is an immediate 
	successor of $\tau$ that is not comparable with $\sigma$, and the relation follows in the same way. This justifies the first statement. From this, we have 
	$x_{\sigma}^2 u_{\sigma}=x_{\sigma} u_{\sigma_-}=0$.
	
	For part \ref{4.2b}, we use induction on $n-k$.  If $k=n$, the result follows at once from the final statement of part \ref{4.2a}, and this is also true if
	$a_n \geq 2$.  If $a_n = 1$, we may replace final part of the product, consisting of $x_{\tau_n}u_{\tau_n}$, by $\tau_{n-1}$, and then the
	result follows from the induction hypothesis.  
	
	We prove \ref{4.2c} and \ref{4.2d} simultaneously. Note that the monomials specified in \ref{4.2c}  kill $u_\tau$ by part \ref{4.2b}. It is easy to see from part \ref{4.2a} that $M$ is spanned as a $K$-vector space by the terms $\mu u_\tau$ such that all variables occurring in $\mu$ having subscripts strictly less than $\tau$; we will call the set of such expressions $\cB$. For a fixed $\omega\in \Sigma$, we will write $\cB(\omega)$ for the elements $\mu u_\tau \in \cB$ with $\tau \ins \omega$. It is easy to recover an element of $\cB$ in $M$ from its multidegree: $\tau$ will be the unique largest element with a negative coefficient in the multidegree, and the exponent on each variable $x_\sigma$ occurring in $\mu$ will be one greater than the coefficient of $\sigma$ in the multidegree. Thus, each multigraded component of $M$ is at most one-dimensional as a $K$-vector space. However, it remains to see that the elements in $\cB$ are all nonzero in $M$.
	
	Let $V$ be the  ``formal" $K$-vector space spanned by the terms $\mu u_\tau$ in $\cB$.  We shall give an $R$-module structure to $V$
	by specifying a $K$-linear endomorphism $\theta_\tau$ of $V$ for every $\tau \in \Sigma_+$ such that the following two conditions hold:
	\begin{enumerate}
		\item[($\dagger$)]\label{cond-1-theta} For all $\sigma, \tau \in \Sigma_+$,  $\theta_\sigma \circ \theta_{\tau} = \theta_\tau \circ \theta_\sigma$,  
		\item[($\ddagger$)]\label{cond-2-theta} For all $\sigma, \tau \in \Sigma_+$, if $\sigma$ and $\tau$ are incomparable, $\theta_\sigma \circ \theta_{\tau} = \theta_\tau \circ \theta_\sigma = 0$. 
	\end{enumerate}  
	
	These conditions give $V$ the structure of an $R$-module.  We then verify $V \cong M$ as $R$-modules in such a way that the formal basis element $\mu u_\tau$
	corresponds to the term $\mu u_\tau$ in $V$.  
	
	Let $\fh(\tau)  = n$, and let $\tau_0 \nx \tau_1 \nx \cdots \nx \tau_{n-1} \nx \tau_n = \tau$ be the chain of elements that are $\ins \tau$.  
	For each monomial $\mu$ in the $x_\sigma$ for $\sigma \inns \tau$, let $k$ be one plus the largest index $j$ such that $x_{\tau_{j}}$ does not occur 
	with positive degree in $\mu$, which we take to be $0$ if all the variables occur; note that $k$ depends on $\mu$, but we omit it from the notation for ease of reading.  Then $\mu$  can be written uniquely in the form  $\nu_\mu \gamma_\mu$  where
	\begin{align*} \gamma_\mu \quad &\text{is a monomial in the variables indexed by} \quad \tau_{k}, \,\tau_{k+1},  \dots, \, \tau_{n-1} \quad \\
	&\text{and each such variable divides}  \ \ \gamma_\mu,
	\intertext{and}
	\nu_\mu \quad  &\text{involves only variables with subscripts} \quad \tau_1, \, \ldots, \tau_{k-2}.
	\end{align*}
	Note that if $x_{\tau_{n-1}}$ does not occur with a positive exponent, then $\nu_\mu = \mu$ and 
	$\gamma_\nu = 1$,  while if for $1 \leq j \leq n$ each $x_{\tau_j}$  occurs with positive exponent then $\nu_\mu = 1$ and $\gamma_\mu = \mu$.  
	For $\sigma\in \Sigma_+$, let $\theta_{\sigma}$ be the $K$-linear endomorphism of $V$ specified on the basis $\cB$ by 
	
	\[ \theta_{\sigma} (\mu u_{\tau}) = 
	\begin{cases} 
	x_{\sigma}\mu u_{\tau}&  \mbox{if } \sigma \inns \tau,\\
	\nu_\mu u_{\tau_{k-1}} & \mbox{if } \sigma=\tau \ \text{and all exponents in }\gamma_{\mu} \text{ are } 1, \\ 
	0 & \mbox{if } \sigma=\tau \ \text{and some exponent in }\gamma_{\mu} \text{ is }\geq 2, \\ 
	0  &\mbox{if } \sigma \not\ins \tau.   \\ 
	\end{cases}\] 
	
	We need to check that ($\dagger$) and ($\ddagger$) hold. Fix $\tau = \tau_n$ of height $n$. 
	Note that $\theta_{\sigma}$ stabilizes the $K$-span of $\cB(\tau)$,  which we denote $W(\tau)$,
	and that all $x_\sigma$ kill these elements unless $\sigma \ins \tau$.  It therefore suffices to consider only the variables $x_{\tau_i}$ for  $1 \leq i \leq n$,
	and the $u_{\tau_i}$ for $0 \leq i \leq n$.  To simplify notation, we shall write $i$ instead of $\tau_i$, so that we have $0 \nx 1 \nx \, \cdots \, \nx n$.
	To verify ($\dagger$) and ($\ddagger$) consider the $R$-module $R/\fA$  where $\fA$ is the monomial ideal generated by all variables whose subscripts are not $\ins n$
	and all monomials of the form $x_k^2x_{k+1} \cdots x_n$.  The quotient has a $K$-basis consisting of all monomials of the form $\nu \gamma_k$  
	for $0 \leq k \leq n$ where $\gamma_k = \prod_{j=k+1}^n x_j$ and $\nu$ is a monomial in $\vect x {k-1}$. 
	
	There is a vector space isomorphism between 
	$R/\fA$ and $W(\tau)$ that maps $\nu \gamma_k$ to $\nu \tau_k$.  It is straightforward to verify that
	for $1 \leq j \leq n$,  $x_j \nu\gamma_k$ corresponds to $\theta_j(\nu \tau_k)$.  It follows that ($\dagger$) and ($\ddagger$) hold for the $\theta_j$ acting on
	$W(\tau)$, and thus ($\dagger$) and ($\ddagger$) hold on $V=\varinjlim_{\tau} W(\tau)$. This gives $V$ the structure of an $R$-module. 
	
	The $R$-linear map from the free module $\cM$
	to $V$ such that $U_\tau \mapsto  1\cdot u_\tau$ kills the relations defining $M$, and so induces an $R$-linear surjection $M \surj V$.  Since
	the elements in $M$ corresponding to $\cB$ span $M$,  these elements are also a $K$-basis for $M$:
	if they were linearly dependent, their images in $V$ would be as well.  It follows that the map $M \surj V$ is an $R$-isomorphism.  This proves
	part \ref{4.2d}, while part \ref{4.2c} now follows from the fact that $Ru_\tau \cong W(\tau) \cong R/\fA$.  
	
	Part \ref{4.2e} is clear, since every $\sigma \in \Sigma$ has at least two incomparable immediate successors $\tau$, $\omega$,  and given any minimal prime $P =P_{\Gamma}$, at
	least one of $x_\tau$ or $x_{\omega}$ is in $P_\Gamma$, say $x_\tau$, and then $u_\sigma = x_{\tau} u_{\tau} \in PM$.  
	
	It remains only to prove \ref{4.2f}. Fix $\sigma \in \Sigma_+$, and consider the exact sequence  
	$$\CD 0 @>>> \Ann_R (x_\sigma) @>>> R @>{x_{\sigma}}>> R \endCD.$$  
	If $M$ were $R$-flat then tensoring with
	$M$ would yield that $$\Ann_M (x_\sigma) = (\Ann_R (x_\sigma))M.$$ We have
	\[ \Ann_R (x_\sigma)  = \big(\{x_\tau: \tau \in \Sigma_+ \ \text{ and } \ \sigma, \, \tau \ \text{ are incomparable }\}\big).\] 
	Suppose $\sigma \nx \sigma_1$.  Then $x_\sigma u_{\sigma_1}$ is a nonzero element of $M$ in $\Ann_M (x_{\sigma})$.  Hence, if $M$ were $R$-flat we would have that $x_\sigma u_{\sigma_1}= \sum_j x_{\tau_j} \mu_j u_{\omega_j}$ with $\tau_j$ incomparable to $\sigma$ and $\mu_j$ a monomial for each $j$. Using the $\Z^{\oplus \Sigma}$ grading, and the fact that each piece is a one-dimensional $K$-vector space, there must be an equality of the form $x_\sigma u_{\sigma_1}=x_{\tau} \mu u_{\omega}$,  with $\sigma, \tau$ incomparable, and $\tau \ins \omega$, for otherwise the right-hand side would be zero. Using the module structure as computed above, we can write $x_{\tau} \mu u_{\omega}=\mu_1 u_{\omega_0}$ with $\omega_0 \ins \omega$. Thus we have $x_\sigma u_{\sigma_1}=\mu_1 u_{\omega_0}$ in $M$. Since the elements of $\cB$ form a basis, we must have $\sigma_1=\omega_0$. But then, we have $\sigma \ins \sigma_1 \ins \omega$, and $\tau \ins \omega$, so $\sigma$ and $\tau$ are comparable, a contradiction. \end{proof}

\begin{theorem} Let $R$ and $M$ be as above. Let $\fm$ be the homogeneous maximal ideal of $R$. Then $R_{\fm}\to (R\ltimes M)_{\fm (R\ltimes M)}$ is a local homomorphism of quasilocal rings, with source $R_{\fm}$ reduced, that satisfies the \step, but is not flat.
\end{theorem}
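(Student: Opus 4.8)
The plan is to establish, in order, the three assertions: that $R_{\fm}\to (R\ltimes M)_{\fm(R\ltimes M)}$ is a local homomorphism of quasilocal rings with $R_{\fm}$ reduced, that it has the \step, and that it is not flat. For the first two points I would work with the graded map $R\to R\ltimes M$ and then localize at $\fm=R_{>0}$.

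\emph{Basic structure.} Write $S=R\ltimes M=R\oplus M$, so $M^2=0$ in $S$. Since $M$ is a nil ideal of $S$, $\Spec S=\Spec R$; in particular $\fm$ corresponds to a maximal ideal of $S$, and $\fm S=\fm\oplus\fm M=\fm\oplus M$, using $M=\fm M$ (which holds because $M=PM$ for a minimal prime $P\subseteq\fm$, by Proposition~\ref{description-M}\ref{4.2e}). Idealization commutes with localization, so $W^{-1}S\cong R_{\fm}\ltimes M_{\fm}$ for $W=R\smallsetminus\fm$; moreover every $(r,m)\in S\smallsetminus\fm S$ becomes a unit in $W^{-1}S$, since $(r,m)=(r,0)(1,r^{-1}m)$ with $(r,0)$ a unit and $(1,r^{-1}m)$ a unit because $r^{-1}m$ is nilpotent. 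Hence $S_{\fm S}\cong W^{-1}S\cong R_{\fm}\ltimes M_{\fm}$, which is quasilocal (it is a localization of $S$ at a prime), and $R_{\fm}\to S_{\fm S}$ is local. Finally $R=K[X_\sigma:\sigma\in\Sigma_+]/\fI$ is reduced because $\fI$ is generated by squarefree monomials, so $R_{\fm}$ is reduced.

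\emph{The \step.} By Proposition~\ref{description-M}\ref{4.2e}, $M=PM$ for every minimal prime $P$ of $R$; hence $S/PS\cong(R/P)\ltimes(M/PM)=R/P$ and the induced map $R/P\to S/PS$ is the identity, which trivially has the \step. By Proposition~\ref{bas2}\ref{2.2c} it follows that $R\to S$ has the \step, and then by Proposition~\ref{bas2}\ref{2.2a} (localizing at $W=R\smallsetminus\fm$) so does $R_{\fm}\to W^{-1}S=S_{\fm S}$. One may also argue directly from Proposition~\ref{bas1}\ref{2.1e}: for a domain $D$ with structure map $R\to D$ of kernel $P$, $D\otimes_R S\cong D\ltimes(D\otimes_R M)$, which is never zero and is a domain exactly when $D\otimes_R M\cong D\otimes_{R/P}(M/PM)$ vanishes; and $M=PM$ holds for \emph{every} prime $P$ by the argument of Proposition~\ref{description-M}\ref{4.2e}, since the variables avoiding any prime have totally ordered indices.

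\emph{Failure of flatness.} As an $R$-module $S=R\oplus M$, so $S_{\fm S}=R_{\fm}\oplus M_{\fm}$ as $R_{\fm}$-modules; if $S_{\fm S}$ were $R_{\fm}$-flat then so would be its direct summand $M_{\fm}$. The proof of Proposition~\ref{description-M}\ref{4.2f} shows that for a suitable $\sigma\in\Sigma_+$ the $\Z$-graded module $\Tor_1^R(R/x_\sigma R,M)\cong\Ann_M(x_\sigma)/(\Ann_R(x_\sigma))M$ is nonzero, witnessed by a homogeneous element. Since $R$ is $\N$-graded with $R_0=K$, a nonzero homogeneous element $n$ of a $\Z$-graded $R$-module survives localization at $\fm=R_{>0}$: for $s=s_0+s'\in R\smallsetminus\fm$ with $s_0\in K^{\times}$ and $s'\in R_{>0}$, the component of $sn$ of the same degree as $n$ is $s_0 n\neq0$. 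Hence $\Tor_1^{R_{\fm}}(R_{\fm}/x_\sigma R_{\fm},M_{\fm})\cong\Tor_1^R(R/x_\sigma R,M)_{\fm}\neq0$, so $M_{\fm}$ — and therefore $S_{\fm S}$ — is not flat over $R_{\fm}$.

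\emph{Main obstacle.} The genuinely substantive step is the last one: propagating the non-flatness of $M$ over $R$ to $M_{\fm}$ over $R_{\fm}$. It hinges on the obstruction in Proposition~\ref{description-M}\ref{4.2f} being homogeneous, together with the elementary fact that a nonzero $\Z$-graded module over an $\N$-graded ring whose degree-zero part is a field is not annihilated by inverting the complement of the irrelevant maximal ideal. Everything else is a formal consequence of Section~\ref{basic} and Proposition~\ref{description-M}, the only minor bookkeeping being the identification $W^{-1}S\cong S_{\fm S}$, which uses only the nilpotence of $M$ in $S$.
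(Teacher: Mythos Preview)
Your proof is correct and follows essentially the same approach as the paper: you deduce the \step\ for $R\to R\ltimes M$ from $M=PM$ via Proposition~\ref{bas2}\ref{2.2c} and then localize using Proposition~\ref{bas2}\ref{2.2a}, and you show non-flatness of $M_\fm$ by observing that the homogeneous obstruction $\Ann_M(x_\sigma)/(\Ann_R(x_\sigma))M$ survives localization at $\fm$ by Lemma~\ref{graded-nzd}. Your treatment is somewhat more detailed than the paper's (e.g., the explicit identification $W^{-1}S\cong S_{\fm S}$ and the alternative direct argument via Proposition~\ref{bas1}\ref{2.1e}), but the substance is the same.
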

\begin{proof}
	Since $R$ is reduced, $R_{\fm}$ is as well. Observe that $R\to R\ltimes M$ has the \step\ by  Proposition~\ref{bas2}\ref{2.2c}, since for any minimal prime $P$, we have
	${(R\ltimes M)/P(R\ltimes M)\cong R/P}$. 
	Then, by Proposition~\ref{bas2}\ref{2.2a}, the map $R_{\fm}\to (R\ltimes M)_{\fm (R\ltimes M)}$ has the \step\ as well.  It remains only to show that
	$M_\fm$ is not flat over $R_\fm$.  We can argue in the same way:  if $M_\fm$ were flat, we would have for every $\sigma \in \Sigma_+$
	that  $\Ann_{M_\fm} (x_\sigma)/(\Ann_R (x_\sigma))M_{\fm} = 0$,  and since localization commutes both with taking the annihilator of an element
	and with forming quotients, this would imply $\bigl(\Ann_M (x_\sigma)/(\Ann_R (x_\sigma))M\bigr)_\fm = 0$.  To see that this does not happen we
	may apply Lemma~\ref{graded-nzd}, using the $\N$-grading on $R$ and the $\Z$-grading on $M$ introduced before the statement of Proposition~\ref{description-M}.
\end{proof}   

\section{Intersection Flatness} 

Recall (cf.~\cite[p.~41]{HH}) where the terms ``intersection-flat" and ``$\cap$-flat are used)
that an $R$-module $S$ is {\it intersection flat} if $S$ is $R$-flat and for any finitely generated $R$-module $M$, \medskip
\begin{enumerate}
\item[(\#)]        for every 
	family of submodules  $\{M_\la\}_{\la \in \Lambda}$ of $M$ we have 
	$S\otimes_R (\bigcap_\la M_\la)$ $= \bigcap_\la (S\otimes_R M_\la)$ when both are identified with their images in $S\otimes_R M$.
\end{enumerate}
\smallskip

Note that, quite generally, there is an obvious injective map from the first module to the second. Here, $S$ will usually be an $R$-algebra 
in which case  we also say the homomorphism $R \to S$ is {\it \infl}.   A flat homomorphism satisfies the property $(\#)$ for any module $M$ whenever $\Lambda$ is finite.

In particular, if $S$ is \infl, then for every family of ideals ${\{ I_{\la} \, : \, \la \in \Lambda\}}$ of $R$ the equality $(\bigcap_{\la} I_\lambda)S = \bigcap_{\la} (I_\la S)$ holds. We shall say that $S$ (or $R \to S$ in the algebra case) is \emph{weakly \infl\ for ideals} if this condition holds, i.e., if ($\#$) holds when $M = R$, and \emph{\infl\ for ideals} if additionally $R\to S$ is flat. We caution the reader that the definition of ``intersection flat'' given in \cite{AH} is the notion that we call ``\infl\ for ideals" here. Note that when $M = R$, we may identify $I\otimes_R S$ with $IS$.

The notion of weak intersection flatness for ideals has been previously studied in the context of the theory of content. If $S$ is a weakly \infl\ $R$-module, then, for any $s\in S$, there is a unique smallest ideal $I$ of $R$ such that $s\in IS$, called the \emph{content} of $s$; in fact, this characterizes the property of weak intersection flatness for ideals by \cite[1.2]{OR}. Note that if $S$ is a polynomial ring over $R$, then this notion of content coincides with the classical notion of content as the ideal generated by the coefficients of a polynomial. A module $S$ that is weakly intersection flat for ideals is called a \emph{content module} in \cite{OR,Ru}, and called an \emph{Ohm-Rush module} in \cite{ES1,ES2}.

The relationships between the notions in the title of this paper have been explored in the works cited in the previous paragraph, and some statements related to parts of the lemmas below appear in these sources. We note the following result of Rush \cite[Theorem~3.2]{Ru}, which should be compared with Theorem~\ref{thm:main} and Proposition~\ref{equiv-Iflat} below.

\begin{theorem}[Rush]
	Let $\varphi: R\to S$ be an injective ring homomorphism that is weakly intersection flat for ideals. Suppose that $R$ is reduced and that $\varphi$ satisfies the \pep. Then $\varphi$ is flat.
	\end{theorem}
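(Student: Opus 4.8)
The plan is to exploit that, since $S$ is weakly \infl\ for ideals, it is a \emph{content} $R$-module in the sense of Ohm--Rush: every $s\in S$ has a content $c(s)=c_S(s)$, the smallest ideal $I$ of $R$ with $s\in IS$, and $s\in c(s)S$. The goal is to prove the identity $c(as)=a\,c(s)$ for all $a\in R$, $s\in S$; granting this, flatness of $S$ over $R$ follows from the equational criterion for flatness by a short induction. In contrast to Theorem~\ref{thm:main}, no finiteness hypothesis on the minimal primes is needed, because being weakly \infl\ for ideals supplies the infinite-intersection identities $(\bigcap_\lambda I_\lambda)S=\bigcap_\lambda(I_\lambda S)$ for free.

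The first step is to show that for every minimal prime $\mathfrak p$ of $R$ one has $\mathfrak pS\neq S$ and $\mathfrak pS\cap R=\mathfrak p$. If $a\in\mathfrak pS\cap R$ then $c(a)\subseteq\mathfrak p$, and writing $a=\sum_i b_is_i$ with $b_i\in c(a)\subseteq\mathfrak p$ and $s_i\in S$, each $b_i$ maps to $0$ in the field $R_{\mathfrak p}$ (here $R$ reduced and $\mathfrak p$ minimal are used), so there is $w_i\in R\smallsetminus\mathfrak p$ with $w_ib_i=0$; the product $w$ of the $w_i$ kills every $b_i$, so $wa=0$ in $S$, hence in $R$ by injectivity, and primeness of $\mathfrak p$ gives $a\in\mathfrak p$. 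Taking $a=1$ shows $\mathfrak pS\neq S$; by the \pep, $\mathfrak pS$ is then a prime of $S$, so $S/\mathfrak pS$ is a domain.

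The second step is the identity $\Ann_S(a)=(\Ann_R a)S$ for $a\in R$. Since $R$ is reduced, $\Ann_R a=\bigcap_{\mathfrak p\not\ni a}\mathfrak p$ over the minimal primes $\mathfrak p$ avoiding $a$, so $(\Ann_R a)S=\bigcap_{\mathfrak p\not\ni a}\mathfrak pS$ because $S$ is weakly \infl\ for ideals. If $as=0$ and $a\notin\mathfrak p$, then in the domain $S/\mathfrak pS$ the image of $a$ is nonzero (by the first step), forcing the image of $s$ to vanish, i.e.\ $s\in\mathfrak pS$; intersecting over such $\mathfrak p$ gives $s\in(\Ann_R a)S$, and the reverse inclusion is clear. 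Now $c(as)=a\,c(s)$ follows: one always has $c(as)\subseteq a\,c(s)$, and $c(as)\subseteq(a)$ since $as\in aS$, so $c(as)=aJ$ with $J=(c(as):a)$; then $as\in c(as)S=a(JS)$ yields $a(s-y)=0$ for some $y\in JS$, so $s-y\in\Ann_S(a)=(\Ann_R a)S$, whence $s\in(J+\Ann_R a)S$, $c(s)\subseteq J+\Ann_R a$, and $a\,c(s)\subseteq aJ=c(as)$.

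Finally, by the equational criterion $S$ is $R$-flat iff for every tuple $(\vect an)$ in $R$, every $S$-syzygy of $\vect an$ is an $S$-linear combination of $R$-syzygies, equivalently $\Tor_1^R(R/I,S)=0$ for all finitely generated ideals $I=(\vect an)$. One inducts on $n$: the case $n=1$ is exactly $\Ann_S(a_1)=(\Ann_R a_1)S$, and for $n\geq2$, given $\sum_ia_iw_i=0$ over $S$, one has $a_nw_n\in(a_1,\dots,a_{n-1})S$, so $a_n\,c(w_n)\subseteq(a_1,\dots,a_{n-1})$ by the identity just proved; writing $w_n=\sum_k\gamma_kt_k$ with $\gamma_k\in c(w_n)$, each $a_n\gamma_k$ lies in $(a_1,\dots,a_{n-1})$, giving $R$-syzygies $z_k$ of $\vect an$ with last coordinate $-\gamma_k$, and replacing $(\vect wn)$ by $(\vect wn)+\sum_kt_kz_k$ (which changes it by an element of the $R$-syzygy submodule) kills the last coordinate, reducing to $(a_1,\dots,a_{n-1})$ by induction. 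The step I expect to be the crux is the identity $\Ann_S(a)=(\Ann_R a)S$: this is precisely where the \pep\ is essential---it is what makes the fibers $S/\mathfrak pS$ domains---and where reducedness, the content function, and weak \infl-ness for ideals must all be used together; the passage from content-multiplicativity to flatness is then a routine, if slightly fiddly, induction.
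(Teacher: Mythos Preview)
The paper does not supply its own proof of this theorem: it is stated as a result of Rush \cite[Theorem~3.2]{Ru} and used for context, so there is no in-paper argument to compare against. That said, your proof is correct, and the strategy via Ohm--Rush content is presumably close to Rush's original.

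One remark on efficiency relative to the surrounding paper: once you have established $\Ann_S(a)=(\Ann_R a)S$ for all $a\in R$ (your second step), you could finish immediately by invoking Proposition~\ref{equiv-flat}\ref{fl-4}, since weak intersection flatness for ideals certainly gives $(\#)$ for finite families of ideals of $R$. Your steps 3 and 4 (the content-multiplicativity identity $c(as)=a\,c(s)$ and the induction on $n$) therefore reprove, in content-theoretic language, the implication \ref{fl-4}~$\Rightarrow$~\ref{fl-1} of that proposition. This is not a defect---your route is self-contained and arguably cleaner, as it replaces the use of $(f_1,\dots,f_{n-1})S\cap f_nS=\bigl((f_1,\dots,f_{n-1})\cap f_nR\bigr)S$ in the paper's argument by the single identity $a_n\,c(w_n)=c(a_nw_n)\subseteq(a_1,\dots,a_{n-1})$---but it is worth noting that the real content of the argument is concentrated exactly where you predicted: in the identity $\Ann_S(a)=(\Ann_R a)S$, which is the one place where the \pep, reducedness of $R$, injectivity of $\varphi$, and weak intersection flatness all interact.
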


For the most part, it is intersection flatness for ideals that we use for results on the \step\ (and, as mentioned above, 
is the definition used in \cite{AH}).
As is shown in Proposition~\ref{int-basic}\ref{i-flat-dir-sum} below, if  $\phi:R\to S$ is module-free (i.e., $S$ is a free $R$-module) then $\phi$ is intersection flat \cite[p.~41]{HH}.
Proposition~\ref{int-basic} below collects some facts about intersection flatness. Note that intersection flatness for the Frobenius endomorphism
mapping a regular ring to itself is studied in \cite[5.3]{Ka} and in \cite[\S9]{Sh}. 

\begin{example}\label{notmod} Let $K$ be a field and let $R = K[x]_{\fm}$ where $\fm = xK[x]$.  Let $f$ be a formal power series in 
	$x$ that is not in the fraction field $K(x)$ of $K[x]$.
	Then $K\llbracket x\rrbracket$ is \infl\ for ideals over $R$, since the intersection of any infinite family of ideals is $(0)$ in both rings.  However, $K\llbracket x\rrbracket$ is {\it not}
	intersection flat over~$R$.  To see this, let $f_n$ denote unique the polynomial of degree at most $n$ that agrees with $f$ modulo $x^{n+1}K\llbracket x \rrbracket$.
	Let $M = R^{\oplus 2}$,  and  let $M_n= R(1, f_n) + R(0,x^{n+1})$.  When we tensor with $\wh{R} = K\llbracket x \rrbracket$, then $\wh{R} \otimes_R M_n$ may
	be identified with the submodule of $\wh{R}^{\oplus 2}$ spanned by $(1,f)$ and $(0,x^{n+1})$.  The intersection of these is $\wh{R}(1,f)$.  But the intersection
	of the $M_n$ in $R^2$ is 0: if $(g,h)\in M_n$, then $h - fg\in \fm^n$, so if $(g,h)$ were a nonzero element of the intersection, we would have  that $f = g/h$ would be rational over $K[x]$. \end{example}

\begin{example} $K\llbracket x,\,y\rrbracket$ is not even \infl\ for ideals over $R = K[x,\,y]_{\fm}$,  where ${\fm}$ is the maximal ideal $(x,\,y)$ in the polynomial ring $K[x,y]$.
	Let $f \in K\llbracket x \rrbracket$ be transcendental over $K[x]$ and let $f_n$ be as in Example~\ref{notmod}.  Then the ideals $I_n = (y - f_n, x^{n+1})R$ when extended
	to  $\wh{R} = K\llbracket x,y\rrbracket$ agree with $(y-f, \, x^{n+1})\wh{R}$, and so their intersection is $(y-f)\wh{R}$.  But their intersection in $R$ is 0, for
	if $0 \not= G(x,\,y) \in R$ were in the intersection we may  clear the denominator and assume that $G \in K[x, y]$,  and then
	we would have $G(x, f) = 0$, contradicting the transcendence of $f$.  \end{example}

\begin{discussion}\label{altint} Note that a family of $R$-submodules of $M/N$  has the form $M_\lambda/N$ where the $M_\la$ are submodules of $M$ containing $N$,
	and we have
	$$\bigcap_\la(M_\la /N) = \big(\bigcap_\la M_\la\big)/N \quad \hbox{\ and so}$$
	$$\bigcap_\la \bigl(S \otimes_R (M_\la/N)\bigr) = \bigcap_\la \bigl((S \otimes_R M_\la)/(S \otimes_R N)\bigr) = \biggl(\bigcap_\la (S \otimes_R M_\la)\biggr)/(S \otimes_R N).$$
	Under the assumption that  $R\to S$ is intersection flat, this identifies with
	\[ \Biggl(S \otimes_R\biggl( \bigcap_\la M_\la\biggr)\Biggl) \Bigg/ (S \otimes_R N) =   S \otimes_R \Biggl(\biggl(\bigcap_\lambda M_\la\biggr)\bigg/N\Biggr) = S \otimes_R\bigl(\bigcap_\la (M_\la/N)\bigr),\]
	where the equalities are canonical identifications of submodules of $S \otimes_R M$ or of $S \otimes_R (M/N)$. 
\end{discussion}

The following result clarifies when properties related to intersection flatness over $R$ imply flatness.  One consequence is that in the definition
of intersection flat, it is not necessary to assume flatness separately:  the property $(\#)$, even for finite families of submodules of $R^2$, implies
it. 

\begin{proposition}\label{equiv-flat}  Let $R$ be a ring and $S$ an $R$-module.  The following are equivalent.
	\begin{enumerate}[label=(\roman*)]
		\item\label{fl-1} $S$ is $R$-flat.
		\item\label{fl-2} For every $R$-module $M$, the property $(\#)$ holds for every finite family of submodules of $M$.
		\item\label{fl-3} The property $(\#)$ holds for every finite family of submodules of  $M = R^2$ (and, hence, for every
		finite family of ideals in $M = R$).
		\item\label{fl-4} For every element $f \in R$,  $\Ann_S (f) = (\Ann_R (f))S$, and the property $(\#)$ holds for every finite family
		of ideals of $M = R$.
       \end{enumerate}
\end{proposition}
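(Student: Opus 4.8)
The plan is to establish the cycle of implications $\ref{fl-1}\Rightarrow\ref{fl-2}\Rightarrow\ref{fl-3}\Rightarrow\ref{fl-4}\Rightarrow\ref{fl-1}$. For $\ref{fl-1}\Rightarrow\ref{fl-2}$, given submodules $M_1,M_2$ of $M$, I would start from the exact sequence $0\to M_1\cap M_2\to M\to (M/M_1)\oplus(M/M_2)$ in which the second map sends $m$ to the pair of its residues; tensoring with a flat $S$ preserves exactness, and flatness also identifies $S\otimes_R(M/M_i)$ with $(S\otimes_R M)/(S\otimes_R M_i)$, so the kernel of $S\otimes_R M\to \bigoplus_i (S\otimes_R M)/(S\otimes_R M_i)$ is simultaneously $S\otimes_R(M_1\cap M_2)$ and $(S\otimes_R M_1)\cap(S\otimes_R M_2)$. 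An evident induction extends this to finite families, and $\ref{fl-2}\Rightarrow\ref{fl-3}$ is trivial.

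For $\ref{fl-3}\Rightarrow\ref{fl-4}$ I would deduce the two assertions of $\ref{fl-4}$ separately. The statement that $(\#)$ holds for finite families of ideals of $R$ follows by regarding ideals $I_j\subseteq R$ as the submodules $I_j\oplus 0$ of $R^2$; the relevant intersections and tensor-images of these submodules are exactly $(\bigcap_j I_j)\oplus 0$, $I_jS\oplus 0$, and so on. For the identity $\Ann_S(f)=(\Ann_R f)S$, I would apply $(\#)$ to the two submodules $N_1=R(1,f)$ and $N_2=R\oplus 0$ of $R^2$. Both are direct summands (the first coordinate projection retracts $N_1$), so tensoring with $S$ embeds $S\otimes_R N_1$ and $S\otimes_R N_2$ into $S^2$ as $S(1,f)$ and $S\oplus 0$; meanwhile $N_1\cap N_2=(\Ann_R f)\oplus 0$ and $S(1,f)\cap(S\oplus 0)=\Ann_S(f)\oplus 0$, so $(\#)$ yields $(\Ann_R f)S\oplus 0=\Ann_S(f)\oplus 0$.

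The heart of the matter is $\ref{fl-4}\Rightarrow\ref{fl-1}$. I would first note that for $a\in R$ the identity $\Ann_S(a)=(\Ann_R a)S$ is precisely the vanishing of $\Tor_1^R(R/aR,S)$: splicing $0\to\Ann_R(a)\to R\xrightarrow{a}R$ gives $\Tor_1^R(R/aR,S)\cong\Ann_S(a)/(\Ann_R a)S$. Since flatness of $S$ is equivalent to $\Tor_1^R(R/I,S)=0$ for every finitely generated ideal $I$, it suffices to prove this vanishing by induction on the number of generators of $I$, the case of a principal (or zero) ideal being the observation just made. For the inductive step write $I=I'+aR$ with $I'$ generated by fewer elements; from the exact sequence $0\to R/(I'\cap aR)\to R/I'\oplus R/aR\to R/I\to 0$ and the vanishing of $\Tor_1^R(R/I',S)$ and $\Tor_1^R(R/aR,S)$, the long exact sequence of $\Tor$ embeds $\Tor_1^R(R/I,S)$ into $\ker\bigl(S/(I'\cap aR)S\to S/I'S\oplus S/aS\bigr)=(I'S\cap aS)/(I'\cap aR)S$, which is zero by the instance of $(\#)$ applied to the pair of ideals $I'$ and $aR$. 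Hence $\Tor_1^R(R/I,S)=0$ for all finitely generated $I$, so $S$ is flat.

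I expect $\ref{fl-4}\Rightarrow\ref{fl-1}$ to be the main obstacle: the individual ingredients are standard, but one has to match the two clauses of $\ref{fl-4}$ with exactly the two $\Tor$-vanishing inputs needed --- the principal-ideal base case and the Mayer--Vietoris step --- and organize the induction so that the only intersection of ideals that ever arises is the pair $\{I',aR\}$, for which $(\#)$ is available. A minor subtlety in $\ref{fl-3}\Rightarrow\ref{fl-4}$ is that the tensor-images $S\otimes_R N_i$ must be computed before flatness is known; choosing $N_1,N_2$ to be direct summands of $R^2$ makes those images transparent.
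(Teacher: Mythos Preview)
Your argument is correct, and for \ref{fl-1}$\Rightarrow$\ref{fl-2}$\Rightarrow$\ref{fl-3}$\Rightarrow$\ref{fl-4} it matches the paper's proof almost verbatim (the paper also intersects $R(1,0)$ and $R(1,f)$ in $R^2$; your remark that these are direct summands, so their tensor-images in $S^2$ are transparent even before flatness is known, is a nice clarification the paper leaves implicit).

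The step \ref{fl-4}$\Rightarrow$\ref{fl-1} is where you diverge. The paper proves it via the equational criterion for flatness: given a relation $\sum_{i=1}^n f_i s_i=0$ in $S$, one shows by induction on $n$ that it is an $S$-linear combination of relations with coefficients in $R$. In the inductive step, $-f_n s_n=\sum_{i<n} f_i s_i$ lies in $(f_1,\dots,f_{n-1})S\cap f_nS$, which by $(\#)$ equals $\bigl((f_1,\dots,f_{n-1})\cap f_nR\bigr)S$; subtracting an $S$-combination of the resulting $R$-relations splits the original relation into one on $f_1,\dots,f_{n-1}$ and one on $f_n$ alone, handled by induction and the annihilator hypothesis respectively. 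Your proof repackages exactly the same two ingredients homologically: the annihilator condition becomes $\Tor_1^R(R/aR,S)=0$, and the intersection condition for the pair $\{I',aR\}$ becomes the vanishing of the Mayer--Vietoris connecting map, so the long exact sequence forces $\Tor_1^R(R/I,S)=0$. The two arguments are equivalent in content; yours is cleaner bookkeeping at the cost of invoking the $\Tor$ long exact sequence, while the paper's is more elementary but requires tracking coefficients explicitly.
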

\begin{proof}  The implication \ref{fl-1} $\Rightarrow$ \ref{fl-2} is well-known, and \ref{fl-2} $\Rightarrow$ \ref{fl-3}
is clear.  To show that \ref{fl-3} implies \ref{fl-4}, consider the submodules spanned by $(1,0)$ and $(1,f)$ in $R^2$.  Their intersection is 
	$\Ann_R (f) \times 0$. If we expand to $S$, the intersection is $\Ann_S (f) \times 0$, while the expansion of the intersection is 
	$(\Ann_R (f))S \times 0$,  from which the result follows. 
	
	It remains to show \ref{fl-4} $\Rightarrow$  \ref{fl-1}; i.e., that  \ref{fl-4} implies that $S$ is flat.
	It suffices to show that for every finitely generated ideal $I = (\vect f n)R$ of $R$,  we have that $\Tor_1^R(R/I,\,S) = 0$.
	This follows
	if every relation $\sum_{i=1}^n f_i s_i = 0$ is an $S$-linear combination of relations on $\vect f n$ over $R$.  We prove this by induction
	on $n$.  If $n = 1$, this follows from the fact that $\Ann_S (f_1) = (\Ann_R (f_1))S$.  
	Now suppose we know the result for $\vect f {n-1}$, $n \geq 2$. 
	The relation $\sum_{i=1}^n{f_is_i = 0}$ implies that 
	$$(\star) \qquad -f_ns_n = \sum_{i=1}^{n-1} f_is_i$$ 
	and this is element is in $(\vect f {n-1})S \cap f_nS = \bigl((\vect f {n-1})R \cap f_nR\bigr)S$,  and so the equation in ($\star$) arises as an $S$-linear combination, say with coefficients $t_j \in S$, $1 \leq j \leq h$, of equations
	$$(\star) \qquad -f_nr_{nj} = \sum_{i=1}^{n-1} f_ir_{ij}$$ where the $r_{ij} \in R$.  These may be rewritten as relations on $\vect f n$
	with coefficients $r_{ij}$ in $R$.  After multiplying these relations by the $t_j$,  adding, and subtracting the sum from the original
	relation, we get a new relation on $\vect fn$ with coefficients in $S$,  say $\sum_{i=1}^{n-1} f_iu_i +f_nu_n = 0$, in which
	both $\sum_{i=1}^{n-1} f_iu_i = 0$ and $f_nu_n = 0$.  Using the induction hypothesis for the former and the case $n=1$
	for the latter, we see that this relation is an $S$-linear combination of relations over $R$.
	\end{proof}

\begin{proposition}\label{equiv-Iflat}
	Let $R$ be a ring, and $S$ be an $R$-module. The following are equivalent.
	\begin{enumerate}[label=(\roman*)]
		\item\label{ifl-1} $S$ is an intersection flat $R$-module.
		\item\label{ifl-2} For every finitely generated $R$-module $M$, the property ($\#$) holds.
		\item\label{ifl-3} For every finitely generated free $R$-module $M$, the property ($\#$) holds.
		\item\label{ifl-4} For every finitely generated $R$-module $M$ and every family of submodules $\{ M_{\lambda} \, : \, \lambda\in \Lambda\}$ such that $\bigcap_{\la} M_{\lambda}=0$, we have $\bigcap_\la (S \otimes_R M_\la) = 0$.
	\end{enumerate}
\end{proposition}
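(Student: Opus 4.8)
The plan is to establish the cycle $\ref{ifl-1}\Rightarrow\ref{ifl-2}\Rightarrow\ref{ifl-3}\Rightarrow\ref{ifl-2}$ together with $\ref{ifl-2}\Leftrightarrow\ref{ifl-4}$ and $\ref{ifl-2}\Rightarrow\ref{ifl-1}$. Two implications are immediate: $\ref{ifl-1}\Rightarrow\ref{ifl-2}$ is the definition of intersection flatness with its flatness clause dropped, and $\ref{ifl-2}\Rightarrow\ref{ifl-3}$ holds because a finitely generated free module is in particular finitely generated. For $\ref{ifl-2}\Rightarrow\ref{ifl-1}$, I would apply $\ref{ifl-2}$ with $M=R^2$ to finite families of submodules: this is exactly condition \ref{fl-3} of Proposition~\ref{equiv-flat}, so $S$ is $R$-flat, and together with $\ref{ifl-2}$ this is the definition of intersection flatness. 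Note that $\ref{ifl-3}$ already subsumes $(\#)$ for finite families of submodules of $R^2$, so the same argument shows $\ref{ifl-3}$ forces flatness of $S$ as well.

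The one implication with real content is $\ref{ifl-3}\Rightarrow\ref{ifl-2}$. Let $M$ be finitely generated and write $M\cong F/N$ with $F$ a finitely generated free module, so that any family of submodules of $F/N$ has the form $\{M_\lambda/N\}_{\lambda\in\Lambda}$ with $N\subseteq M_\lambda\subseteq F$. Since $\ref{ifl-3}$ gives flatness of $S$ (as just noted), $S\otimes_R N$ injects into $S\otimes_R F$, and the image of each $S\otimes_R(M_\lambda/N)$ in $S\otimes_R(F/N)$ is the image of $S\otimes_R M_\lambda$ in $S\otimes_R F$ taken modulo the image of $S\otimes_R N$ (these nest because $N\subseteq M_\lambda$, and every one of them contains the image of $S\otimes_R N$). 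This is precisely the computation carried out in Discussion~\ref{altint}: the intersection over $\lambda$ can be formed downstairs in $S\otimes_R F$, where property $(\#)$ for the \emph{free} module $F$ applies, and then transported back to the quotient, yielding $(\#)$ for $F/N\cong M$. Combined with the trivial implications, this gives $\ref{ifl-1}\Leftrightarrow\ref{ifl-2}\Leftrightarrow\ref{ifl-3}$.

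For $\ref{ifl-2}\Leftrightarrow\ref{ifl-4}$: the forward direction is the special case $\bigcap_\lambda M_\lambda=0$, since then $S\otimes_R\bigl(\bigcap_\lambda M_\lambda\bigr)$ has image $0$ in $S\otimes_R M$. Conversely, assume $\ref{ifl-4}$, let $\{M_\lambda\}$ be a family of submodules of a finitely generated $M$, put $N=\bigcap_\lambda M_\lambda$, and pass to $M/N$, which is finitely generated with $\bigcap_\lambda(M_\lambda/N)=0$. By $\ref{ifl-4}$ the images of the $S\otimes_R(M_\lambda/N)$ in $S\otimes_R(M/N)$ intersect in $0$. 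If $\xi\in S\otimes_R M$ lies in the image of $S\otimes_R M_\lambda$ for every $\lambda$, then its image in $S\otimes_R(M/N)$ lies in each of those, hence is $0$, so $\xi$ lies in the kernel of $S\otimes_R M\surj S\otimes_R(M/N)$, which by right exactness of $S\otimes_R(-)$ is the image of $S\otimes_R N$. The reverse containment is automatic since $N\subseteq M_\lambda$, so $(\#)$ holds for $M$.

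I expect the only genuine obstacle to be bookkeeping: throughout, every ``$\bigcap$'' and every ``quotient'' must be interpreted as an operation on images inside a fixed ambient tensor product $S\otimes_R F$ or $S\otimes_R M$, and one must check that passing to a quotient module does not disturb these images — which is exactly where flatness enters (for the quotient identifications in $\ref{ifl-3}\Rightarrow\ref{ifl-2}$) and where right exactness enters (for the kernel computation in $\ref{ifl-4}\Rightarrow\ref{ifl-2}$). Once the reduction ``$(\#)$ for finitely generated free modules plus flatness $\Rightarrow$ $(\#)$ for all finitely generated modules'' is in place, and $\ref{ifl-4}$ is recognized as the $N=0$ shadow of $(\#)$ transported back along $M\rightsquigarrow M/\bigcap_\lambda M_\lambda$, the rest is formal.
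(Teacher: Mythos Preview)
Your proof is correct and follows essentially the same route as the paper: the trivial implications $\ref{ifl-1}\Rightarrow\ref{ifl-2}\Rightarrow\ref{ifl-3}$, the reduction $\ref{ifl-3}\Rightarrow\ref{ifl-2}$ via a free presentation and Discussion~\ref{altint}, the equivalence $\ref{ifl-2}\Leftrightarrow\ref{ifl-4}$ by passing to $M/\bigcap_\lambda M_\lambda$, and the return to $\ref{ifl-1}$ via Proposition~\ref{equiv-flat}. Your write-up is in fact more explicit than the paper's in the step $\ref{ifl-4}\Rightarrow\ref{ifl-2}$, where you spell out the right-exactness argument that the paper compresses into a citation of Discussion~\ref{altint}; note also that the quotient identifications in $\ref{ifl-3}\Rightarrow\ref{ifl-2}$ actually need only right exactness (everything is stated in terms of images), so your invocation of flatness there is harmless but not strictly required.
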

\begin{proof}
	The implications \ref{ifl-1} $\Rightarrow$ \ref{ifl-2} $\Rightarrow$ \ref{ifl-3} are clear. The equivalence between \ref{ifl-2} and \ref{ifl-4} is immediate from Discussion~\ref{altint} by replacing $M$ by $M/N$ and every $M_\lambda$ by $M_\lambda/N$. Similarly, for \ref{ifl-3} $\Rightarrow$ \ref{ifl-2}, given a module $M$ and a family of submodules, we map a free module $G \surj M$ and work with the family consisting of the inverse images of the $M_\lambda$ in $G$.  We may then apply Discussion~\ref{altint}. 
	It remains only to show \ref{fl-4} $\Rightarrow$  \ref{fl-1}; i.e., that  \ref{fl-4} implies that $S$ is flat. But this is immediate from the
	implication \ref{fl-2} $\Rightarrow$ \ref{fl-1} in Proposition~\ref{equiv-flat},   
		\end{proof}

\begin{proposition}\label{int-basic} Let $R$ be a ring.
	\begin{enumerate}[label=(\alph*)]
		\item\label{ib-a} If $R \to S$ is intersection flat (respectively, \infl\ for ideals) and $T$ is an $S$-module that is \infl\ (respectively, \infl\ for ideals) over $S$, then $T$ is \infl\ (respectively, \infl\ for ideals) over~$R$.  
		\item\label{i-flat-dir-sum}\label{ib-b} Direct sums and direct summands of modules that are intersection flat (\rifi) are intersection flat (\rifi).
		\item\label{ib-c} If $R$ is Noetherian, arbitrary products of modules that are intersection flat  (\rifi) are intersection flat (\rifi). 
		\item\label{ib-d} The formal power series ring in finitely many variables over a Noetherian ring is intersection flat. 
		\item\label{ib-e} Let $R$ be a  complete local Noetherian ring and $S$ an $R$-flat algebra.  If $S/IS$ is $\fm$-adically separated  for every         ideal $I$ of $R$, then $S$ is \infl\ for ideals.  If $S \otimes_R M$ is  $\fm$-adically separated for all finitely generated $R$-modules $M$  then $R \to S$ is \infl.  In particular, if $S$ is Noetherian and $\fm S$ is contained in the Jacobson radical
		of $S$,  then $R \to S$ is \infl.  
		\item\label{ib-f} If $S$ is intersection flat (\rifi) over $R$ and  $W$ is a multiplicative system in $S$ such that
		no element of $W$ is a zerodivisor on $S/IS$ for any ideal $I$ of $R$,  then $W^{-1}S$ is intersection
		flat (\rifi) over $R$.  In particular,  if we take $S$ to be a polynomial ring in an arbitrary set of variables over $R$
		and $W$ to consist of a set of polynomials, each of which has a set of coefficients that generates the unit ideal of $R$, then $W^{-1}S$ is intersection flat over $R$.   
	\end{enumerate}
\end{proposition}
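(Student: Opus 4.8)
The plan is to treat \ref{ib-a}--\ref{ib-f} in order, since each builds on its predecessors; in every case ``intersection flat'' means flat together with property ($\#$), and since flatness of the module in question is always routine—transitivity of flatness for \ref{ib-a} and \ref{ib-f}, flatness of direct sums and summands for \ref{ib-b}, and the standard fact (Chase) that a product of flat modules over a Noetherian (indeed coherent) ring is flat for \ref{ib-c}--\ref{ib-d}—the substance is in each case verifying ($\#$). I would argue the module versions; the versions ``for ideals'' are the case $M=R$ and are no harder. For \ref{ib-a}: given finitely generated $M$ over $R$ with submodules $\{M_\la\}$ and $N=\bigcap_\la M_\la$, first apply ($\#$) for $R\to S$ to get $S\otimes_R N=\bigcap_\la(S\otimes_R M_\la)$ inside $S\otimes_R M$, then apply ($\#$) for $S\to T$ to the finitely generated $S$-module $S\otimes_R M$ and its submodules $S\otimes_R M_\la$, using the canonical identifications $T\otimes_S(S\otimes_R M_\la)\cong T\otimes_R M_\la$, and compose. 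For \ref{ib-b}: with $S=\bigoplus_i S_i$ one has $S\otimes_R M=\bigoplus_i(S_i\otimes_R M)$ compatibly with all the submodules, and since any element has only finitely many nonzero components, membership in $\bigcap_\la(S\otimes_R M_\la)$ is equivalent to each component lying in $\bigcap_\la(S_i\otimes_R M_\la)=S_i\otimes_R N$; the summand case is the same computation applied to $S\oplus S'$ followed by projection.

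For \ref{ib-c}: since $R$ is Noetherian, $M$ and each $M_\la$ is finitely presented, so $(\prod_i S_i)\otimes_R M\cong\prod_i(S_i\otimes_R M)$ and likewise for each $M_\la$, and the componentwise argument of \ref{ib-b} goes through verbatim. Part \ref{ib-d} is then immediate, because $R\llbracket x_1,\dots,x_n\rrbracket$ is, as an $R$-module, the product of copies of $R$ indexed by monomials, and $R$ is trivially intersection flat over itself.

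Part \ref{ib-e} is where the real work is, and I expect the Chevalley step to be the main obstacle: the idea is to trade the infinite intersection for finite ones (which commute with $\otimes_R S$ by flatness) one $\fm$-adic layer at a time, and then pass to the limit using completeness. Fix finitely generated $M$, submodules $\{M_\la\}$, $N=\bigcap_\la M_\la$, and $s\in\bigcap_\la(S\otimes_R M_\la)$; I must show $s\in S\otimes_R N$. For each $n$, $s\in(S\otimes_R M_\la)+\fm^n(S\otimes_R M)=S\otimes_R(M_\la+\fm^nM)$ for all $\la$; in the finite-length module $M/\fm^nM$ the lattice of finite subintersections of the $(M_\la+\fm^nM)/\fm^nM$ has a minimal element, so $J_n:=\bigcap_\la(M_\la+\fm^nM)$ is already a finite subintersection and moreover $J_n\inc M_\mu+\fm^nM$ for every $\mu$, whence by flatness $s\in S\otimes_R J_n$. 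The $J_n$ form a descending chain with $\bigcap_n J_n=\bigcap_\la\bigcap_n(M_\la+\fm^nM)=\bigcap_\la M_\la=N$ by the Krull intersection theorem, so $J_n/N$ is a descending chain of submodules of the finitely generated module $M/N$ over the complete local Noetherian ring $R$ with trivial intersection; by Chevalley's lemma—this is the one place completeness of $R$ is used—for every $m$ there is $n$ with $J_n\inc N+\fm^mM$, whence $s\in S\otimes_R(N+\fm^mM)=(S\otimes_R N)+\fm^m(S\otimes_R M)$. Since $S\otimes_R(M/N)=(S\otimes_R M)/(S\otimes_R N)$ is $\fm$-adically separated by hypothesis, intersecting over $m$ gives $s\in S\otimes_R N$. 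The ``in particular'' clause follows because, when $S$ is Noetherian and $\fm S$ lies in the Jacobson radical of $S$, each $S\otimes_R M$ is a finitely generated $S$-module on which $\fm$ acts through an ideal contained in the radical, hence is $\fm$-adically separated by Krull's theorem applied over $S$.

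For \ref{ib-f}: $W^{-1}S$ is flat over $R$ by transitivity, so I need ($\#$) to survive localization, and the obstacle is that localization does \emph{not} commute with infinite intersections of submodules in general—the zerodivisor hypothesis is precisely what repairs this. First I would observe that every $w\in W$ is a nonzerodivisor on $S\otimes_R(M/M_\la)$ for each $\la$: the finitely generated module $M/M_\la$ has a finite filtration with cyclic quotients $R/J_i$, so (using $S$ flat) $S\otimes_R(M/M_\la)$ has a finite filtration with quotients $S/J_iS$, on each of which $w$ is a nonzerodivisor by hypothesis, and nonzerodivisors pass through short exact sequences. Then write $P=S\otimes_R M$ and $P_\la=S\otimes_R M_\la\inc P$, identify $W^{-1}S\otimes_R M$ with $W^{-1}P$ and $W^{-1}S\otimes_R M_\la$ with $W^{-1}P_\la$, and take $\xi=p/w\in\bigcap_\la W^{-1}P_\la$; for each $\la$ there is $v_\la\in W$ with $v_\la p\in P_\la$, and since $v_\la$ is a nonzerodivisor on $P/P_\la$ this forces $p\in P_\la$. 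Hence $p\in\bigcap_\la P_\la=S\otimes_R N$ (intersection flatness of $S$ over $R$), so $\xi\in W^{-1}(S\otimes_R N)=W^{-1}S\otimes_R N$, which is ($\#$) for $W^{-1}S$. Finally, for the polynomial-ring corollary: $S=R[\{x_i\}]$ is $R$-free, hence intersection flat by \ref{ib-b}; if $f$ has coefficients generating the unit ideal of $R$, then its image in $(R/I)[\{x_i\}]=S/IS$ has coefficients generating $R/I$, so $f$ is a nonzerodivisor on $S/IS$ by McCoy's theorem, and the same holds for any product of such polynomials since content is multiplicative; thus $W$ meets the hypothesis and $W^{-1}S$ is intersection flat over $R$.
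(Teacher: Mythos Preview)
Your proof is correct and follows essentially the same approach as the paper throughout: transitivity for \ref{ib-a}, componentwise arguments for \ref{ib-b}--\ref{ib-c}, products of copies of $R$ for \ref{ib-d}, Chevalley plus separatedness for \ref{ib-e}, and the filtration-by-cyclics argument to propagate the nonzerodivisor condition in \ref{ib-f}. Your treatment of \ref{ib-e} is slightly more elaborate than the paper's---you pass through the finite-length quotients $M/\fm^nM$ to extract finite subintersections $J_n$ before invoking Chevalley, whereas the paper simply closes the family $\{M_\la\}$ under finite intersection at the outset and applies Chevalley directly---but the two arguments are equivalent. One small imprecision: in the polynomial corollary to \ref{ib-f} you write ``content is multiplicative,'' which is not true over a general ring; however this is harmless, since by hypothesis $W$ is already a multiplicative system whose members each have unit content, and in any case the product of nonzerodivisors on $S/IS$ is a nonzerodivisor.
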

\begin{proof}  
	The arguments dealing with the case with the property ``intersection flat for ideals" are identical with
	those for the module case, and are not given separately, except for a remark in the proof of part \ref{ib-e}.
	
	Part \ref{ib-a} is immediate from the definition, and part \ref{ib-b} is a completely straightforward consequence of the fact that for a
	direct sum $S = \bigoplus_\mu S_\mu$, we have
	$$(\bigcap M_\lambda) \otimes_R (\bigoplus_\mu  S_\mu) \cong \bigoplus_\mu(\bigcap_\la M_\la) \otimes_R S_\mu,$$
	which will be equal to 
	$$\bigcap_\la \bigl(M_\lambda \otimes_R (\bigoplus_\mu S_\mu)\bigr) \cong  
	\bigcap_\la \bigl(\bigoplus_\mu (M_\lambda \otimes_R S_\mu)\bigr) \cong 
	\bigoplus_\mu \bigl (\bigcap_\lambda\bigl(M_\lambda \otimes_R S_\mu)\bigr).$$
	
	To prove \ref{ib-c}, note that over a Noetherian ring, an arbitrary product of flat modules is flat, by Chase's theorem \cite[Theorem 2.1]{Ch}.  It suffices to prove the property $(\#)$ for an arbitrary finitely generated $R$-module $M$ (or when $M = R$ in the ideal case).  
	This follows from the observation that for a {finitely presented} $R$-module $N = M_\lambda$, the map
	$N\otimes_R(\prod_\mu S_\mu) \to \prod_\mu (N \otimes_R S_\mu)$ is an isomorphism.  (Using the finite presentation of $N$, we 
	reduce to the case where $N = \bigoplus_{i=1}^k Re_i$  is free.  If an element of $\prod_\mu  (N \otimes_R S_\mu)$ has 
	$\mu$-coordinate $\sum_{i=1}^k s_{i,\mu}e_i$, we let it correspond to $\sum_{i=1}^k \sigma_i e_i$ in $N \otimes_R \prod_\mu S_i$,
	where $\sigma_i$ has $\mu$-coordinate $s_{i,\mu}$.)
	
	Part \ref{ib-d} is immediate from part \ref{ib-c}, since these power series rings, as $R$-modules, are products of copies of $R$.
	
	To prove part \ref{ib-e}, we note that since flatness implies that extension commutes with finite intersection, we may assume the
	family $M_\la$ is closed under finite intersection.  Let $N$ denote the intersection of the family.  Chevalley's lemma implies
	that for every $C \in \N$,  there exists $\la_C \in \Lambda$ such that $M_{\la_C} \inc N + \fm^C M$.  It follows that 
	\[S \otimes_R M_{\la_C} \inc  S \otimes_R N +S\otimes_R \fm^C M \subseteq S\otimes_R N + \fm^C (S\otimes_R M).\]  Since $S\otimes_R (M/N)\cong (S\otimes_R M) / (S \otimes_R N)$ is $\fm$-adically separated by hypothesis, the intersection of the modules on the right-hand side as $C$ varies is $S\otimes_R N$, so the intersection of $\{ S \otimes_R M_{\la} \}$ is $S\otimes_R N$.
	In the ideal case, we only need that for every ideal $I$ of $R$,  $S/IS$ is $\fm$-adically separated. For the last statement of \ref{5.5e}, we simply recall that every finitely generated $S$-module is separated with respect to the Jacobson radical of $S$.
	
	For part \ref{ib-f}, note that no element of $W$ is a zerodivisor on any module of the form $S \otimes_R N$, where $N$ is an $R$-module:  it suffices to consider finitely generated $R$-modules $N$, and these have finite filtrations in which the factors have the form  $R/I$. 
	
	Now consider an element of $\bigcap_\la (W^{-1}S \otimes_R \, M_\la)$ in $W^{-1}S \otimes_R M$:  after multiplying by a unit from the 
	image of $W$, we may
	assume this element is the image of an element $u \in S \otimes_R M$.  Then $u/1$ is in every $W^{-1}(S \otimes_R M_\la)$,  and since
	$W$ consists of nonzerodivisors on $S \otimes_R(M/M_\la)$, we have that $u$ is in every $S \otimes_R M_\la$. By the
	hypothesis on $S$,  this implies that $u \in \bigl(S \otimes_R (\bigcap_\lambda M_\la)\bigr)$, and so $u/1 \in W^{-1}S \otimes_R (\bigcap_\la M_\la)$,
	as required.
\end{proof}

Note that part \ref{ib-e} of Proposition~\ref{int-basic} considerably strengthens the result of \cite[Proposition~5.3]{Ka}. 

\begin{proposition}\label{prop:int-flat} If $R \to S$ is intersection flat for ideals and $P$ is the intersection of a family of primes $\{Q_\lambda \ : \ \lambda\in \Lambda\}$ in  $R$ such that $S/(Q_\lambda S)$ is zero or a domain for each $\lambda$, then $S/PS$ a domain.
\end{proposition}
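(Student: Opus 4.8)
The plan is to prove directly that $PS$ is prime or the unit ideal of $S$, which is exactly the assertion that $S/PS$ is a domain or zero. The idea is to use intersection flatness for ideals to convert the primality of $PS$ into an elementary statement about ideals of $R$.

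The first step is the identity
\[ PS \;=\; \Bigl(\bigcap_{\lambda} Q_\lambda\Bigr)S \;=\; \bigcap_{\lambda} (Q_\lambda S), \]
with all intersections taken inside $S$; this is immediate from the defining property of intersection flatness for ideals applied to the family $\{Q_\lambda\}$. Each $Q_\lambda S$ is prime or the unit ideal of $S$, since $S/Q_\lambda S$ is a domain or zero. Now I would take $a,b\in S$ with $ab\in PS$ and aim to show $a\in PS$ or $b\in PS$. For each $\lambda$ we have $ab\in Q_\lambda S$, so $a\in Q_\lambda S$ or $b\in Q_\lambda S$ (and both if $Q_\lambda S = S$). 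Hence, setting $\Lambda_a=\{\lambda: a\in Q_\lambda S\}$ and $\Lambda_b=\{\lambda: b\in Q_\lambda S\}$, we get a cover $\Lambda_a\cup\Lambda_b=\Lambda$.

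Next I would invoke intersection flatness for ideals two more times, now for the subfamilies indexed by $\Lambda_a$ and by $\Lambda_b$. Putting $A=\bigcap_{\lambda\in\Lambda_a}Q_\lambda$ and $B=\bigcap_{\lambda\in\Lambda_b}Q_\lambda$ (ideals of $R$), one gets $a\in\bigcap_{\lambda\in\Lambda_a}(Q_\lambda S)=AS$ and likewise $b\in BS$, while $A\cap B=\bigcap_{\lambda\in\Lambda}Q_\lambda=P$. The crux is the final step: since $P$ is prime and $AB\subseteq A\cap B=P$, one of $A,B$ is contained in $P$; as $P=A\cap B$ is contained in each of $A,B$, this forces $A=P$ or $B=P$, hence $a\in AS=PS$ or $b\in BS=PS$. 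This shows $PS$ is prime — and it is a proper ideal as soon as some $Q_\lambda S\neq S$, while $PS=S$ otherwise — which finishes the argument.

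I do not expect a serious obstacle here; the proof is short. Its two ingredients are the repeated (routine) application of the defining property of intersection flatness for ideals and the elementary fact that a prime ideal equal to a finite intersection of ideals must equal one of them — it is precisely at this last point that primality of $P$ is genuinely used, and one should keep that in mind, since an intersection of primes each with domain fiber need not itself have a domain fiber. The only bookkeeping that needs a little care is the alternative $Q_\lambda S=S$ in forming the partition, and this is harmless because such indices may be placed in both $\Lambda_a$ and $\Lambda_b$.
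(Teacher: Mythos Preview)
Your argument is correct and follows the same route as the paper's proof: split $\Lambda$ according to which factor lies in $Q_\lambda S$, set $A$ and $B$ to be the intersections of the $Q_\lambda$ over the two pieces, use intersection flatness for ideals to get $a\in AS$ and $b\in BS$, and then use that $P$ is prime with $A\cap B=P$ to force $A=P$ or $B=P$. You also do well to flag explicitly that primality of $P$ is essential at the last step (the paper's statement leaves this implicit, though its proof and its applications use it).
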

\begin{proof} 
	If $fg \in PS$, then for each $\lambda$, we have either $f  \in Q_\lambda S$ or $g \in Q_\lambda S$. Set $\Lambda=\Lambda_f \cup \Lambda_g$, where $f  \in Q_\lambda S$ for $\lambda\in \Lambda_f$ and $g  \in Q_\lambda S$ for $\lambda\in \Lambda_g$. Then, if $A=\bigcap_{\lambda\in \Lambda_f} Q_{\lambda}$ and $B=\bigcap_{\lambda \in \Lambda_g} Q_{\lambda}$ we have radical ideals $A, B$ for which
	$A \cap B = P$.  We have  $f \in AS$ and $g \in BS$.  But one of $A,B$ must be $P$, so $f \in PS$ or $g \in PS$. 
\end{proof}

We recall that a \emph{Hilbert ring} is a ring in which every prime ideal is an intersection of maximal ideals.

\begin{proposition}\label{prop:Hilbert-pep}
	If  $R$  is a Hilbert ring,  $R \to S$  is intersection flat for ideals, and for every
	maximal ideal  $\fm$ of $R$,  $S/\fm S$  is zero  or a domain, then 
	$R$ has the \pep.
\end{proposition}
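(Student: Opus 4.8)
The plan is to deduce this directly from Proposition~\ref{prop:int-flat} together with the defining property of a Hilbert ring. I would fix an arbitrary prime ideal $P$ of $R$ and show that $PS$ is either a prime ideal or the unit ideal of $S$, which is exactly the \pep.

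First I would use that $R$ is a Hilbert ring to write $P = \bigcap_{\lambda \in \Lambda} \fm_\lambda$, where each $\fm_\lambda$ is a maximal ideal of $R$ and $\Lambda$ is nonempty because $P$ is proper. By the standing hypothesis, $S/\fm_\lambda S$ is zero or a domain for every $\lambda$, so $P$ is an intersection of a family of primes of $R$ each of whose extensions to $S$ is zero or a domain; this is precisely the hypothesis of Proposition~\ref{prop:int-flat}, whose remaining hypothesis --- that $R \to S$ be intersection flat for ideals --- is among our assumptions. (It is intersection flatness for ideals that supplies the identity $PS = \bigcap_\lambda (\fm_\lambda S)$ on which the argument of that proposition rests.)

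Then I would simply invoke Proposition~\ref{prop:int-flat}: if $PS = S$ there is nothing to prove, since $PS$ is then the unit ideal; otherwise the proposition gives that $S/PS$ is a domain, so $PS$ is a prime ideal of $S$. In either case $PS$ is prime or the unit ideal of $S$, and since $P$ was an arbitrary prime of $R$ this establishes the \pep\ for $R \to S$.

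I do not expect a substantive obstacle here, since essentially all of the work is already contained in Proposition~\ref{prop:int-flat}; the only point that needs a moment's care is the degenerate possibility that every $\fm_\lambda S$, and hence $PS$, equals the unit ideal. This case must be separated out because the zero ring is not a domain, but it falls under exactly the ``unit ideal'' alternative that the \pep\ allows, so it causes no real difficulty.
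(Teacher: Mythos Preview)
Your proposal is correct and follows exactly the same route as the paper: invoke Proposition~\ref{prop:int-flat} after writing an arbitrary prime of the Hilbert ring $R$ as an intersection of maximal ideals. Your explicit treatment of the degenerate case $PS = S$ is even slightly more careful than the paper's one-line proof.
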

\begin{proof}
	This is immediate from Proposition~\ref{prop:int-flat}, since every prime is an intersection of maximal ideals by definition.
\end{proof}

Using this, we can give some versions of Proposition~\ref{prop:step-gri} that refer only to closed fibers.

\begin{proposition}\label{prop:Hilbert-step}
	Let $R$ be a Hilbert ring, $R\to S$ be module-free, and suppose that for every maximal ideal $\fm$ of $R$, the fiber $\kappa_{\fm}\otimes_R S$ is a geometrically reduced and irreducible $\kappa_{\fm}$-algebra. Then $R\to S$ has the \step.  
\end{proposition}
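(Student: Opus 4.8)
The plan is to reduce the \step\ statement to a question about closed fibers by combining Proposition~\ref{prop:Hilbert-pep} with the characterization of the \step\ in Proposition~\ref{bas1}\ref{2.1f}. First I would observe that by Proposition~\ref{bas1}\ref{2.1f}, it suffices to show that for every $R$-algebra $R'$, the base-changed map $R' \to R' \otimes_R S$ has the \pep. So let $R \to R'$ be arbitrary. Since $R \to S$ is module-free, the base change $R' \to R' \otimes_R S$ is again module-free, hence intersection flat for ideals by Proposition~\ref{int-basic}\ref{i-flat-dir-sum} (a free module is a direct sum of copies of the ring). By Proposition~\ref{prop:Hilbert-pep}, once we know that $R'$ is a Hilbert ring and that all of its closed fibers for the map to $R' \otimes_R S$ are zero or domains, we get the \pep\ for $R' \to R' \otimes_R S$, and we are done.

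The trouble is that an arbitrary $R$-algebra $R'$ need not be a Hilbert ring. The standard fix: by Proposition~\ref{bas1}\ref{2.1c}, to check the \step\ it is enough to treat the case where $R' = D$ is a \emph{finitely generated} $R$-algebra that is a domain (and check $D \otimes_R S$ is a domain or zero). A finitely generated algebra over a Hilbert ring is again a Hilbert ring (this is the classical generalized Nullstellensatz / ``Hilbert ring'' stability result), so $D$ is a Hilbert ring. Now $D \to D \otimes_R S$ is module-free, hence intersection flat for ideals. To apply Proposition~\ref{prop:Hilbert-pep} to $D \to D\otimes_R S$ I must show that for every maximal ideal $\fn$ of $D$, the fiber $\kappa_\fn \otimes_D (D\otimes_R S) \cong \kappa_\fn \otimes_R S$ is zero or a domain.

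The key step, and the main obstacle, is identifying this closed fiber of $D$ with a fiber of $R$. Since $D$ is a Hilbert ring finitely generated over $R$, the contraction $\fm$ of a maximal ideal $\fn$ of $D$ is a maximal ideal of $R$ (again by the generalized Nullstellensatz: finitely generated algebras over Hilbert rings are ``Jacobson'' and closed points map to closed points), and moreover $\kappa_\fn$ is a finite — in particular algebraic — field extension of $\kappa_\fm$. Now $\kappa_\fm \otimes_R S$ is a geometrically reduced and irreducible $\kappa_\fm$-algebra by hypothesis, i.e.\ it has the \step\ over $\kappa_\fm$ by the remark following the definition of \gri\ in \S\ref{sec-main}; since $\kappa_\fn$ is a field extension of $\kappa_\fm$, Proposition~\ref{bas1}\ref{2.1e} gives that $\kappa_\fn \otimes_{\kappa_\fm} (\kappa_\fm \otimes_R S) \cong \kappa_\fn \otimes_R S$ is a domain or zero. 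Feeding this into Proposition~\ref{prop:Hilbert-pep} yields that $D \to D \otimes_R S$ has the \pep, hence $D \otimes_R S$ is a domain or zero, which by Proposition~\ref{bas1}\ref{2.1c} establishes that $R \to S$ has the \step. The only facts used beyond the paper are the standard permanence properties of Hilbert/Jacobson rings under finite type extensions; everything else is assembled from Propositions~\ref{bas1}, \ref{int-basic}, and \ref{prop:Hilbert-pep}.
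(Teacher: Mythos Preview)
Your proof is correct and follows essentially the same strategy as the paper's: reduce to Proposition~\ref{prop:Hilbert-pep} applied to a module-free finitely generated base change, using that finitely generated algebras over Hilbert rings are Hilbert, that their maximal ideals contract to maximal ideals of $R$, and that the \gri\ hypothesis then makes the closed fibers domains. The only cosmetic difference is that the paper works directly with the polynomial extensions $R[X_1,\dots,X_n] \to S[X_1,\dots,X_n]$ from the definition of the \step\ rather than passing through Proposition~\ref{bas1}\ref{2.1c}; the ingredients and logic are otherwise identical.
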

\begin{proof}
	Since $R\to S$ is module-free, by base change
	\[R[\vect X n]\to S \otimes_R R[\vect X n] \cong S[\vect X n]\]
	is  module-free as well, and hence intersection flat. By the Hilbert hypothesis, any maximal ideal $\fM$ contracts to a maximal ideal $\fm$ of $R$. We then have that 
	\[ \frac{S[\vect X n]}{\fM S[\vect X n]} \cong \kappa_{\fM} \otimes_R S \cong (\kappa_{\fm} \otimes_R S) \otimes_{\kappa_{\fm}} \kappa_{\fM} \]
	and by the hypothesis on the fibers, this quotient is either a domain or zero. By Proposition~\ref{prop:Hilbert-pep}, $R[\vect X n]\to S[\vect X n]$ then has the \pep, as required.
\end{proof}

\begin{corollary}\label{affstep}
	Let $R$ be a finitely generated algebra over an algebraically closed field, and let $S$ be a nonzero module-free $R$-algebra. Suppose that for every maximal ideal $\fm$ of $R$, we have $S/\fm S$ is a domain. Then $R\to S$ has the \step. 
\end{corollary}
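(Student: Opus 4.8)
The plan is to deduce the corollary directly from Proposition~\ref{prop:Hilbert-step}, whose three hypotheses I would verify in turn: that $R$ is a Hilbert ring, that $R\to S$ is module-free, and that for every maximal ideal $\fm$ of $R$ the fiber $\kappa_\fm\otimes_R S$ is a \gri\ $\kappa_\fm$-algebra. The module-freeness is given. That $R$ is a Hilbert ring is part of the Nullstellensatz: a finitely generated algebra over a field is a Jacobson ring, so every prime of $R$ is an intersection of maximal ideals.

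For the fibers I would use the other half of the Nullstellensatz: since $K$ is algebraically closed and $R$ is finitely generated over $K$, one has $\kappa_\fm = R/\fm \cong K$ for every maximal ideal $\fm$ of $R$. Hence the fiber $\kappa_\fm\otimes_R S$ is simply $S/\fm S$, which is a nonzero domain by hypothesis. What remains is to promote ``domain'' to ``\gri'', i.e.\ to check that $L\otimes_K(S/\fm S)$ is a domain for every field extension $L/K$.

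The main (and essentially only nontrivial) point is this last step: a domain over an algebraically closed field is geometrically integral. I would argue it by passing to fractions. Put $A = S/\fm S$ and $F = \mathrm{Frac}(A)$. Since $K$ is algebraically closed, it is algebraically closed in $F$; since $K$ is perfect, $F/K$ is a separable extension; hence $F/K$ is a regular field extension, so $F\otimes_K L$ is an integral domain for every field extension $L/K$. As $L$ is flat over $K$, tensoring the inclusion $A\hookrightarrow F$ with $L$ keeps it injective, so $A\otimes_K L\hookrightarrow F\otimes_K L$ realizes $A\otimes_K L$ as a subring of a domain, hence a domain. This establishes that each $S/\fm S$ is \gri, and Proposition~\ref{prop:Hilbert-step} then yields that $R\to S$ has the \step. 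Everything else is bookkeeping; the only ingredient I expect to cite rather than reprove is the standard fact that a regular field extension stays a domain after arbitrary base field extension.
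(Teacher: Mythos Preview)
Your proof is correct and follows essentially the same route as the paper's: verify $R$ is Hilbert, note that the residue fields $\kappa_\fm$ are equal to the algebraically closed field $K$ by the Nullstellensatz, argue that a domain over an algebraically closed field is geometrically reduced and irreducible, and invoke Proposition~\ref{prop:Hilbert-step}. The only difference is cosmetic: where the paper cites EGA~IV, Propositions~4.5.1 and~4.6.1 for the geometric integrality step, you supply the standard self-contained argument via regular field extensions.
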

\begin{proof}
	In this case $R$ is a Hilbert ring, and every residue field is algebraically closed. Thus, if the fiber $\kappa_{\fm} \otimes_R S$ is a domain or zero, then it is a \gri\ $\kappa_{\fm}$-algebra, by, e.g., \cite[Propositions~4.5.1 and~4.6.1]{EGA-IV2}. Proposition~\ref{prop:Hilbert-step} then applies.
\end{proof}

\begin{theorem}\label{gr-reg} Let  $K$ be an algebraically closed field and let $S$ be an $\N$-graded $K$-algebra that is a domain.  
	Let $\vect Fn$ be a regular sequence of forms in $S$ generating a prime ideal $Q$ of $S$. 
	Then the $K$-algebra map of the polynomial ring $R = K[\vect Xn] \to S$  such that $X_i \mapsto F_i$, $1 \leq i \leq n$, has the \step. \end{theorem}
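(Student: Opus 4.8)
The strategy is to reduce the statement to an application of the closed-fiber criterion in Corollary~\ref{affstep} together with an intersection flatness argument, by passing to a polynomial subring over which $S$ behaves well, and then handling the regular-sequence condition via a deformation/specialization argument. More precisely, I would first observe that $R = K[\vect Xn] \to S$ is injective: since $\vect Fn$ form a regular sequence they are in particular algebraically independent over $K$ (a polynomial relation among them would, after picking a term of lowest degree, give a relation showing some $F_i$ is a zerodivisor modulo the others), so $R$ embeds in $S$. The goal is to show $R \to S$ has the \step, i.e., by Proposition~\ref{bas1}\ref{2.1e} that $D \otimes_R S$ is a domain or zero for every map $R \to D$ with $D$ a domain. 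Equivalently, by Theorem~\ref{main2}, since $R$ is a domain it suffices to show $R \to S$ is flat and every fiber is geometrically reduced and irreducible — and by Proposition~\ref{prop:step-gri} (or directly) the whole thing will follow if we can control one well-chosen closed fiber.

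\textbf{Key steps.} First I would show $S$ is flat over $R = K[\vect Xn]$. This is the standard fact that a module-finite-type hypothesis is not needed: if $\vect Fn$ is a regular sequence on $S$ then $S$ is flat over $K[\vect Xn]$ — this is exactly the local criterion for flatness over a polynomial (hence regular) ring, using that $\Tor_1^R(R/(\vect Xi),S) = 0$ inductively because each $F_i$ is a nonzerodivisor on $S/(\vect F{i-1})S$, and $S$ is $\N$-graded so the graded local criterion applies at the irrelevant ideal. Second, I would identify the closed fiber over the origin: $S/(\vect Fn)S = S/Q$, which is a domain by hypothesis, and indeed (being a quotient of an $\N$-graded $K$-algebra by a homogeneous prime, with $K$ algebraically closed) a geometrically reduced and irreducible $K$-algebra by, e.g., \cite[Propositions~4.5.1 and~4.6.1]{EGA-IV2}. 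Third — and this is where intersection flatness enters — I would argue that because $S$ is flat over $R$ and $R$ is a polynomial ring, and the special fiber at the homogeneous maximal ideal $\fm = (\vect Xn)$ is geometrically a domain, the generic behavior is forced: using Proposition~\ref{int-basic} or a direct semicontinuity argument in the graded setting, the locus of primes $P \subset R$ with $S \otimes_R \kappa_P$ a geometrically reduced and irreducible domain is stable under generization and contains $\fm$, hence is everything. Concretely, I would use the $\N$-grading: every element of $R \smallsetminus \fm$ is a nonzerodivisor on $S$ by Lemma~\ref{graded-nzd}, so that properties visible at $\fm$ (like $S/PS$ being a domain for the ideals $P$ we care about) propagate, and then invoke Proposition~\ref{prop:int-flat} / Proposition~\ref{prop:Hilbert-pep}-style reasoning after adjoining indeterminates.

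\textbf{The main obstacle.} The delicate point is the passage from the single closed fiber $S/Q$ being nice to the \emph{stable} prime extension property, i.e., controlling $S[\vect Ym]$ over $R[\vect Ym]$ for all $m$ simultaneously, since $S$ need not be Noetherian and the fibers over non-maximal primes of $R[\vect Ym]$ are not literally closed fibers of the original map. The resolution should be that $R[\vect Ym] \to S[\vect Ym]$ is again flat (flatness is preserved by polynomial base change) with all fibers $\kappa_P \otimes_{R} S \otimes_K K[\vect Ym]$-related to $\kappa_P \otimes_R S$, and the key input is that forming $\vect Fn$ as a regular sequence is preserved: $\vect Fn$ remains a regular sequence in $S[\vect Ym]$ generating the prime $QS[\vect Ym]$ (prime because $S/Q$ is geometrically a domain, so $(S/Q)[\vect Ym]$ is a domain — here geometric irreducibility is exactly what we need, not just irreducibility). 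So the argument is genuinely self-improving: the hypothesis that $S/Q$ is \emph{geometrically} reduced and irreducible feeds back to give the same hypothesis after adjoining variables, and then Theorem~\ref{main2}\ref{2.6c}$\Rightarrow$\ref{2.6a}, combined with flatness over the domain $R$, closes the loop. I expect the bookkeeping to reduce everything, via Proposition~\ref{bas1}\ref{2.1f} and Proposition~\ref{bas2}, to the assertion: \emph{$R \to S$ flat $+$ $S/\fm S$ geometrically a domain $\Rightarrow$ $R \to S$ has the \step}, which is a graded analogue of \cite[Corollary~2.9]{AH} and should follow from the intersection-flatness toolkit in \S5 applied after polynomial base change.
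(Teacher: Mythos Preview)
Your overall architecture matches the paper's: reduce to the \pep\ by noting the hypotheses are stable under adjoining indeterminates, then invoke Corollary~\ref{affstep}. Two points need correction.

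First, a minor one: Corollary~\ref{affstep} requires $S$ to be \emph{module-free} over $R$, not merely flat. The paper proves freeness directly by lifting a homogeneous $K$-basis of $S/Q$ and using graded Nakayama plus induction on $n$; your flatness argument via the local criterion does not by itself supply a free basis, though in the graded situation freeness follows easily once you have flatness.

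Second, and this is the real gap: you only verify that the fiber over the single maximal ideal $\fm = (\vect Xn)$ is a domain, namely $S/Q$. Corollary~\ref{affstep} requires $S/\fm' S$ to be a domain for \emph{every} maximal ideal $\fm'$ of $R$, and these have the form $(X_1 - c_1,\dots,X_n - c_n)$ with the $c_i \in K$ arbitrary. Your semicontinuity sketch (``stable under generization and contains $\fm$, hence is everything'') does not reach these other closed points: generizations of $\fm$ are the primes \emph{contained in} $\fm$, which excludes every other maximal ideal. There is no general principle that a flat graded family with one good closed fiber has all closed fibers good. The paper closes this gap by invoking \cite[Proposition~2.8]{AH}, which is precisely the statement that if $\vect Fn$ is a regular sequence of positive-degree forms with $S/(\vect Fn)$ a domain and $K$ algebraically closed, then $S/(F_1 - c_1,\dots,F_n - c_n)$ is a domain for all $\vect cn \in K$. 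That result is the substantive input you are missing; the rest of your outline, once freeness replaces flatness, is essentially the paper's proof.
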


\begin{proof}   The hypothesis is stable under adjoining finitely many indeterminates to both rings:  we may use these to enlarge the sequence
	$\vect F n$.  Thus, it suffices to prove the \pep\ under the given hypotheses.  The hypothesis implies that $S$ is free over $R$. Indeed, fix
	a homogeneous basis for $S/QS$ over $K$.  These elements will span $S$ over $R$ by the graded version of Nakayama's lemma.
	They have no relations by induction on $n$:  given a nonzero relation we may factor out the highest power of $F_1$ occurring in all coefficients,
	since $F_1$ is a nonzerodivisor in $S$, and then we obtain a nonzero relation on the images of these generators working over
	$S/F_1S$ and $R/X_1 R$. Since every maximal ideal of $R$ has the form $(X_1 - c_1, \, \ldots, X_n-c_n)$  for $\vect c n \in K$, we know that the expansion of any maximal ideal of $R$ to $S$ has the form $(F_1 - c_1, \, \ldots, F_n-c_n)$. The result now follows from Corollary~\ref{affstep} and \cite[Proposition~2.8]{AH}.  
\end{proof}

\section*{Acknowledgments}

We thank Neil Epstein for bringing to our attention the connection between intersection flatness and the theory of content in the sense of Ohm and Rush. We also thank Gabriel Picavet for directing us to his earlier work on the \step\ and flatness for domains.


\bibliographystyle{amsalpha}

\end{document}